\newcommand{\1}{\mathbf {1}}
\newcommand{\Z}{{\mathbb Z}}
\newcommand{\C}{{\mathbb C}}
\newcommand{\g}{{\mathfrak g}}
\newcommand{\h}{{\mathfrak h}}
\newcommand{\wh}{{\widehat{\mathfrak h}}}
\newcommand{\mraff}{\mathrm{aff}}
\def\<{\langle}
\def\>{\rangle}
\def\a{\alpha}
\newcommand{\la}{\langle}
\newcommand{\ra}{\rangle}
\newtheorem{thm}{Theorem}[section]
\newtheorem{prop}[thm]{Proposition}
\newtheorem{lem}[thm]{Lemma}
\newtheorem{rmk}[thm]{Remark}
\newtheorem{definition}[thm]{Definition}
\begin{document}

\begin{center}
{\Large \bf  Orbifold theory of the affine vertex operator superalgebra $L_{\widehat{osp(1|2)}}(k,0)$}

\end{center}

\begin{center}
{ Cuipo Jiang$^{a}$\footnote{Supported by China NSF grants No.12171312.}
and Qing Wang$^{b}$\footnote{Supported by
China NSF grants No.12071385 and No.12161141001.}\\
$\mbox{}^{a}$ School of Mathematical Sciences, Shanghai Jiao Tong University, Shanghai 200240, China\\
\vspace{.1cm}
$\mbox{}^{b}$ School of Mathematical Sciences, Xiamen University,
Xiamen 361005, China\\
}
\end{center}

\begin{abstract}
This paper is about the orbifold theory of affine vertex operator superalgebras. Among the main results, we classify the irreducible modules and determine the fusion rules for the orbifold of the simple affine vertex operator superalgebra $L_{\widehat{osp(1|2)}}(k,0)$.

\end{abstract}

\section{Introduction}
\def\theequation{1.\arabic{equation}}
\setcounter{equation}{0}

This paper is a continuation in a series of papers on the study of affine vertex operator superalgebras $L_{\hat{\mathfrak{g}}}(k,0)$ and their coset subalgebras $K(\mathfrak{g},k)$ called parafermion vertex operator algebras with $\g$ a simple Lie superalgebra. If $\g$ is a Lie algebra, $L_{\hat{\g}}(k,0)$ is a $C_2$-cofinite and rational vertex operator algebra if and only if $k$ is a positive integer \cite{FZ}, \cite{DL}, \cite{Li1}. If $\g$ is not a Lie algebra, it was claimed in  \cite{GK}  that $L_{\hat{\g}}(k,0)$ is $C_2$-cofinite if and only if $\g$ is the simple Lie superalgebra $osp(1|2n)$ and $k$ is a positive integer, which was proved recently in \cite{AL} and \cite{CL}. Also in \cite{CL} the rationality of the affine vertex operator superalgebra $L_{\hat{\g}}(k,0)$ with $k$ being a positive integer was shown. It was proved in \cite{JW3} that the commutant $K(\mathfrak{g},k)$ of a Heisenberg vertex operator subalgebra in the simple affine vertex operator superalgebra $L_{\hat{\mathfrak{g}}}(k,0)$ is a simple vertex operator algebra, where $\g=osp(1|2n)$, and the generators were also determined therein. The generator result shows that the parafermion vertex operator algebra $K(osp(1|2),k)$ associated to $osp(1|2)$ together with $K(sl_2,2k)$ associated to $sl_2$ are building blocks of $K(osp(1|2n),k)$. The structure and representation theories for $K(sl_2,k)$ were studied in \cite{DLWY}, \cite{DW1}, \cite{DW2}, \cite{DW3}, \cite{Lam}, \cite{ALY1}, \cite{ALY2} etc. Especially, the orbifold theory of $K(sl_2,k)$ were studied in \cite{JW1}, \cite{JW2}. Recently, the
representation theory for $K(osp(1|2),k)$ have been  studied in \cite{CFK}. It is natural to consider the orbifold theory of the affine vertex operator superalgebra $L_{\hat{\mathfrak{g}}}(k,0)$ and parafermion vertex operator algebra $K(\mathfrak{g},k)$ with $\g=osp(1|2)$ and $k$ being a positive integer. We prove in Theorem \ref{auto.} that the automorphism group of the parafermion vertex operator algebra $K(\mathfrak{g},k)$ with $\g=osp(1|2), k\geq 3$ is  of order 2 and generated by $\sigma$ which is determined by $\sigma(h)=-h, \ \sigma(e)=f,\ \sigma(f)=e,\ \sigma(x+y)=-\sqrt{-1}(x+y),\ \sigma(x-y)=\sqrt{-1}(x-y)$, where $\{e,f,h,x,y\}$ is the basis of the Lie superalgebra $osp(1|2)$ with
$[e,f]=h,$ $[h,e]=2e$, $[h,f]=-2f$
$[h,x]=x$, $[f,x]=-y$,
$[h,y]=-y$,  $[e,y]=-x$,
$\{x,x\}=2e,$ $\{x,y\}=h$, $\{y,y\}=-2f.$ We denote the simple vertex operator superalgebra $L_{\widehat{osp(1|2)}}(k,0)$ by $L_{\hat{\g}}(k,0)$ in this paper. We know that $L_{\hat{\g}}(k,0)=L_{\hat{\g}}^{even}(k,0)\oplus L_{\hat{\g}}^{odd}(k,0)$, where $L_{\hat{\g}}^{even}(k,0)$ is its even subalgebra and $L_{\hat{\g}}^{odd}(k,0)$ is its odd part. Then $\sigma$ can be lifted to an automorphism of $L_{\hat{\g}}(k,0)$ and  is an automorphism of order $2$ when it restricts to the even subalgebra $L_{\hat{\g}}^{even}(k,0)$. Since $L_{\hat{\g}}(k,0)^{\sigma}=L_{\hat{\g}}^{even}(k,0)^{\sigma}$, where $L_{\hat{\g}}(k,0)^{\sigma}$ is the fix-point vertex operator subalgebra of $L_{\hat{\g}}(k,0)$ and $L_{\hat{\g}}^{even}(k,0)^{\sigma}$ is the fix-point vertex operator subalgebra of $L_{\hat{\g}}^{even}(k,0)$, to study the orbifold of the vertex operator superalgebra $L_{\hat{\g}}(k,0)$ under $\sigma$ is essentially to study the orbifold vertex operator algebra $L_{\hat{\g}}^{even}(k,0)^{\sigma}$. In this paper, we classify the irreducible modules of $L_{\hat{\g}}(k,0)^{\sigma}$ and determine their fusion rules. We will study irreducible modules and fusion rules for the orbifold parafermion vertex algebra $K(g,k)^{\sigma}$ in subsequent papers.

From \cite{CL} and \cite{Dra}, we know that the commutant of affine vertex operator algebra $L_{\hat{sl_2}}(k,0)$ in $L_{\hat{\g}}(k,0)$ is the minimal Virasoro vertex algebra $L^{Vir}(c_{2k+3,k+2},0)$, and the decomposition of $L_{\hat{\g}}(k,0)$ as an $L_{\hat{sl_2}}(k,0)\otimes L^{Vir}(c_{2k+3,k+2},0)$-module was obtained in \cite{CFK} by a character decomposition of simple highest weight modules of $\widehat{osp(1|2)}$. By applying the classification result of the irreducible modules of the vertex operator algebra $L_{\hat{\g}}^{even}(k,0)$ in \cite{CFK} and the classification result of the irreducible modules of the orbifold vertex operator algebra $L_{\hat{sl_2}}(k,0)^{\sigma}$ we obtained in \cite{JW3}, together with the analysis of the lowest weights in the top level of these irreducible modules, we obtain all the irreducible modules of $L_{\hat{\g}}(k,0)^{\sigma}$ in Theorem \ref{thm:affinetwist'}. There are two kinds of irreducible modules, one is the untwisted type and the other is the twisted type. Furthermore, we determine the contragredient modules of all irreducible $L_{\hat{\g}}(k,0)^{\sigma}$-modules in Theorem \ref{thm:contragredient aff}. This result together with the symmetric property of fusion rules in \cite{FHL} implies that we only need to consider the
 fusion products between  the untwisted type modules, and the fusion products between the untwisted type modules and the twisted type modules. We obtain these fusion rules mainly by using the decomposition of irreducible modules of $L_{\hat{\g}}(k,0)^{\sigma}$ in Theorem \ref{thm:affinetwist'} and the fusion rules of $L_{\hat{sl_2}}(k,0)^{\sigma}$ we obtained in \cite{JW3}.

The paper is organized as follows. In Section 2, we recall some results about the affine vertex operator superalgebra $L_{\hat{\g}}(k,0)$ and its parafermion vertex operator subalgebra $K(osp(1|2),k)$. Then we determine the automorphism group of $K(osp(1|2),k)$ by Theorem \ref{auto.}. In Section 3, we classify the irreducible modules of the orbifold $L_{\hat{\g}}(k,0)^{\sigma}$ of the affine vertex operator superalgebra $L_{\hat{\g}}(k,0)$. In Section 4, we determine the fusion rules for $L_{\hat{\g}}(k,0)^{\sigma}$.

\section{Preliminaries}
\label{Sect:V(k,0)}\def\theequation{2.\arabic{equation}}

In this section, we recall from \cite{JW2}, \cite{JW3} some basic results on the affine vertex operator superalgebra and parafermion vertex
operator algebra of $osp(1|2)$ at level $k$ with $k$ being a positive integer.

Let $\g$ be the finite dimensional simple Lie superalgebra $osp(1|2)$ with a Cartan
subalgebra $\h.$ Let $\{e,f,h,x,y\}$ be the basis of the Lie superalgebra $\g$ with the anti-commutation relations:
$$[e,f]=h,\; [h,e]=2e, \; [h,f]=-2f$$
$$[h,x]=x, \; [e,x]=0, \; [f,x]=-y$$
$$[h,y]=-y, \; [e,y]=-x, \; [f,y]=0$$
$$\{x,x\}=2e, \; \{x,y\}=h, \; \{y,y\}=-2f.$$ Let  $\la ,\ra$ be an invariant even supersymmetric
nondegenerate bilinear form on $\g$ such that $\<\a,\a\>=2$ if
$\alpha$ is a long root in the root system of even, where we have identified $\h$ with $\h^*$
via $\<,\>.$  Let $\hat{\mathfrak g}= \g \otimes \C[t,t^{-1}] \oplus \C K$
be the corresponding affine Lie superalgebra. Let $k$ be a positive
integer and
\begin{equation*}
 V_{\hat{\g}}(k,0) = Ind_{\g \otimes
\C[t]\oplus \C K}^{\hat{\g}}\C
\end{equation*}
the induced $\hat{\g}$-module such that ${\g} \otimes \C[t]$ acts as $0$ and $K$ acts as $k$ on $\1=1$.

We denote by $a(n)$ the operator on $V_{\hat{\g}}(k,0)$ corresponding to the action of
$a \otimes t^n$. Then
$$[a(m), b(n)] = [a,b](m+n) + m \la a,b \ra \delta_{m+n,0}k$$
for $a, b \in \g$ and $m,n\in \Z$.

Let $a(z) = \sum_{n \in \Z} a(n)z^{-n-1}$. Then $V_{\hat{\g}}(k,0)$ is a
vertex operator superalgebra generated by $a(-1)\1$ for $a\in \g$ such
that $Y(a(-1)\1,z) = a(z)$ with the
vacuum vector $\1$ and the Virasoro vector
\begin{align*}
\omega_{\mraff} &= \frac{1}{2(k+\frac{3}{2})} \Big(
\frac{1}{2}h(-1)h(-1)\1 +e(-1)f(-1)\1+f(-1)e(-1)\1
\\&-
\frac{1}{2}x(-1)y(-1)\1+
\frac{1}{2}y(-1)x(-1)\1
\Big)
\end{align*}
of central charge $\frac{2k}{2k+3}$(see \cite{KRW}).

Let $M(k)$ be the vertex operator subalgebra of $V_{\hat{\g}}(k,0)$
generated by $h(-1)\1$ with the Virasoro
element
$$\omega_{\gamma} = \frac{1}{4k}
h(-1)^{2}\1$$
of central charge $1$.


The vertex operator superalgebra $V_{\hat{\g}}(k,0)$ has a unique maximal ideal $\mathcal{J}$, which is generated by a weight $k+1$ vector $e(-1)^{k+1}\1$ \cite{GS}. The quotient algebra $L_{\hat{\g}}(k,0)=V_{\hat{\g}}(k,0)/\mathcal{J}$ is a
simple, rational and $C_2$-cofinite vertex operator superalgebra \cite{AL}, \cite{CL}. Moreover, the image of $M(k)$
in $L(k,0)$ is isomorphic to $M(k)$ and will be
denoted by $M(k)$ again. Set
\begin{equation*}
 K(\g,k)=\{v \in L_{\hat{g}}(k,0)\,|\, h(m)v =0
\text{ for }\; h\in {\mathfrak h},
 m \ge 0\}.
\end{equation*}
Then $K(\g,k)$, which is the space of highest weight vectors
with highest weight $0$ for $\wh$,
is the commutant of $M(k)$ in $L_{\hat{\g}}(k,0)$. We proved in \cite{JW3} that $K(\g,k)$ is a simple vertex operator algebra which is called the parafermion vertex operator algebra and generated by

\begin{equation}\label{eq:w3}
\begin{split}
\omega
=\frac{1}{2k(k+2)}(-h(-1)^{2}{\1}
+2ke(-1)f(-1){\1}-kh(-2){\1}),
\end{split}
\end{equation}

\begin{equation}\label{eq:w3'}
\begin{split}
\bar{\omega}
=-h(-1)^{2}{\1}
+4kx(-1)y(-1){\1}-2kh(-2){\1},
\end{split}
\end{equation}

\begin{equation}\label{eq:W3'}
\begin{split}
W^3 &= k^2 h(-3){\1} + 3 k
h(-2)h(-1){\1} + 2h(-1)^3{\1}
-6k h(-1)e(-1)f(-1){\1}
\\
&+3k^2e(-2)f(-1){\1}
-3k^2e(-1)f(-2){\1},
\end{split}
\end{equation}

\begin{equation}\label{eq:W3'''}
\begin{split}
\bar{W}^3 &= 2k^2 h(-3){\1} + 3 k
h(-2)h(-1){\1} + h(-1)^3{\1}
-6k h(-1)x(-1)y(-1){\1}\\&
+6k^2x(-2)y(-1){\1}-6k^2x(-1)y(-2){\1}.\end{split}
\end{equation}

 Let $L_{\hat{sl_2}}(k,0)$ be the simple affine vertex operator algebra associated to $\hat{sl_2}$ and let $L(k,i)$ for $0\leq i\leq k$ be the irreducible modules for the rational vertex operator algebra $L_{\hat{sl_2}}(k,0)$ with the top level $U^{i}=\bigoplus_{j=0}^{i}\mathbb{C}v^{i,j}$ which is an $(i+1)$-dimensional irreducible module of the simple Lie algebra $\C h(0)\oplus\C e(0)\oplus \C f(0)\cong sl_2$.

Let $\sigma$ be an automorphism of the Lie superalgebra $osp(1|2)$ defined by $\sigma(h)=-h, \ \sigma(e)=f,\ \sigma(f)=e,\ \sigma(x+y)=-\sqrt{-1}(x+y),\ \sigma(x-y)=\sqrt{-1}(x-y)$. $\sigma$ can be lifted to an automorphism $\sigma$ of the vertex operator superalgebra $V_{\hat{\g}}(k,0)$ of order 4 in the following way:
$$\sigma(a_{1}(-n_{1})\cdots a_{s}(-n_{s})\1)=\sigma(a_{1})(-n_{1})\cdots \sigma(a_{s})(-n_{s})\1$$
for $a_{i}\in osp(1|2)$ and $n_{i}>0$. Then $\sigma$ induces an automorphism of $L_{\hat{\g}}(k,0)$ as $\sigma$ preserves the unique maximal ideal $\mathcal{J}$, and the Virasoro element $\omega_{\gamma}$ is invariant under $\sigma$. Thus $\sigma$ induces an automorphism of the parafermion vertex operator algebra $K(\g,k)$. In fact, $\sigma(\omega)=\omega,\ \sigma(\bar{\omega})=\bar{\omega}, \sigma(W^{3})=-W^{3},\ \sigma(\bar{W^{3}})=-\bar{W^{3}}$.

 Now we determine the automorphism group of $K(\g,k)$. Let $L^{Vir}(c_{p,q},0)$ be the minimal Virasoro vertex operator algebra with central charge $c_{p,q}=1-\frac{6(p-q)^{2}}{pq}$, $p,q\in\mathbb{Z}_{\geq 2}, (p,q)=1.$  It is known that \cite{W} $L^{Vir}(c_{p,q},0)$ is rational and its irreducible modules are $\{L^{Vir}(c_{p,q},h_{p,q}^{r,s})|h_{p,q}^{r,s}\in S\}$, where

$$S=\{h_{p,q}^{r,s}=\frac{(sq-rp)^{2}-(p-q)^{2}}{4pq}|1\leq r\leq q-1, 1\leq s\leq p-1\}.$$ We denote these irreducible modules $L^{Vir}(c_{p,q},h_{p,q}^{r,s})$ by  $V_{r,s}$ for $1\leq r\leq q-1, 1\leq s\leq p-1$.

\begin{thm}\label{auto.}  Aut$(K(\g,k))=\mathbb{Z}/2\mathbb{Z}$ if $k\geq3$.
\end{thm}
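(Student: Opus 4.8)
The plan is to show that any automorphism of $K(\g,k)$ must fix the generator $\omega$ and the Virasoro subalgebra $L^{Vir}(c_{2k+3,k+2},0)$ it generates, and then to analyze how it can act on the remaining generators $\bar\omega$, $W^3$, $\bar W^3$. First I would recall that $\omega$ is (up to normalization) the conformal vector of $K(\g,k)$, since the commutant construction endows $K(\g,k)$ with a distinguished Virasoro element; hence every automorphism $g$ satisfies $g(\omega)=\omega$. Moreover $\bar\omega$ is, up to scalar, the unique vector (inside $K(\g,k)$) generating the minimal Virasoro subalgebra $L^{Vir}(c_{2k+3,k+2},0)$ — this uses the decomposition of $L_{\hat\g}(k,0)$ as an $L_{\hat{sl_2}}(k,0)\otimes L^{Vir}(c_{2k+3,k+2},0)$-module recalled in the introduction, together with the fact that the Virasoro vector of a rational minimal model is rigid in its own algebra. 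So $g(\bar\omega)=\bar\omega$ as well; in particular $g$ restricts to an automorphism of the tensor product of two Virasoro VOAs, which for $k\ge 3$ (so that the two central charges are distinct and neither model is degenerate) forces $g$ to act trivially on this sub-VOA.

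Next I would pin down the action of $g$ on the two weight-$3$ primary fields $W^3$ and $\bar W^3$. Since $g$ fixes the conformal vector, it preserves each homogeneous weight space; the weight-$3$ piece of $K(\g,k)$, modulo the part generated by $\omega$ and $\bar\omega$, is spanned by $W^3$ and $\bar W^3$ (this is exactly the generator statement for $K(\g,k)$ from \cite{JW3} quoted above). Each of $W^3,\bar W^3$ is a lowest-weight vector for the Virasoro action (a primary field) with a specific conformal weight, and these two weights are distinct when $k\ge 3$; hence $g$ cannot mix them and must send $W^3\mapsto \lambda W^3$, $\bar W^3\mapsto \mu\bar W^3$ for scalars $\lambda,\mu$. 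Now I would extract the constraints on $\lambda,\mu$ from the operator product relations: computing a suitable component of the product $W^3{}_{(n)}W^3$ (respectively $\bar W^3{}_{(n)}\bar W^3$) that lands in the $g$-fixed span of $\omega,\bar\omega,\1$ and is nonzero, applying $g$ gives $\lambda^2=1$ and $\mu^2=1$. Likewise a nonvanishing cross bracket $W^3{}_{(n)}\bar W^3$ lying in the fixed part forces $\lambda\mu=1$, hence $\lambda=\mu\in\{\pm1\}$. This yields $\mathrm{Aut}(K(\g,k))\hookrightarrow \Z/2\Z$.

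Finally, to see the group is exactly $\Z/2\Z$ and not trivial, I would invoke the automorphism $\sigma$ already constructed before the theorem: its restriction to $K(\g,k)$ satisfies $\sigma(\omega)=\omega$, $\sigma(\bar\omega)=\bar\omega$, $\sigma(W^3)=-W^3$, $\sigma(\bar W^3)=-\bar W^3$, so it is the nontrivial element of the group, and since $K(\g,k)$ is simple with $W^3\ne 0$, this automorphism is nontrivial. Combining with the embedding above gives $\mathrm{Aut}(K(\g,k))=\Z/2\Z$ for $k\ge 3$.

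I expect the main obstacle to be the rigidity claims in the first paragraph: proving that an arbitrary automorphism must fix $\bar\omega$ (not merely some Virasoro vector of the right central charge) requires controlling all weight-$2$ primaries of $K(\g,k)$ and ruling out, for small-ish $k$, accidental extra Virasoro vectors of central charge $c_{2k+3,k+2}$ — this is exactly where the hypothesis $k\ge 3$ enters, separating the central charges of the two Virasoro pieces and the conformal weights of $W^3$ and $\bar W^3$, and it is the step that will need the most careful bookkeeping with the $L_{\hat{sl_2}}(k,0)\otimes L^{Vir}$-decomposition and the explicit structure constants in \eqref{eq:w3}–\eqref{eq:W3'''}.
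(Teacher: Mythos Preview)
Your approach is genuinely different from the paper's. The paper does not analyze the action on the four generators directly; instead it uses the decomposition
\[
K(\g,k)=K(sl_2,k)\otimes L^{Vir}(c_{2k+3,k+2},0)\ \oplus\ \bigoplus_{j=1}^{k-1}M^{2j,j}\otimes L^{Vir}(c_{2k+3,k+2},h^{1,2j+1}_{2k+3,k+2})
\]
as a $K(sl_2,k)\otimes L^{Vir}$-module, shows (via the weight-$2$ analysis and the pair of commutant identities $\mathrm{Com}(K(sl_2,k),K(\g,k))=L^{Vir}$ and $\mathrm{Com}(L^{Vir},K(\g,k))=K(sl_2,k)$) that every automorphism fixes $\omega_2$ and hence restricts to an automorphism of $K(sl_2,k)$, and then invokes the known result $\mathrm{Aut}(K(sl_2,k))=\Z/2\Z$ for $k\ge 3$ from \cite{DLY2}. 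The explicit $\sigma$ is then checked to act correctly on $\omega,\bar\omega,W^3,\bar W^3$.

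Your direct approach has two concrete gaps. First, a factual slip: $\omega$ is \emph{not} the conformal vector of $K(\g,k)$; it is the conformal vector of the subalgebra $K(sl_2,k)$, and the conformal vector of $K(\g,k)$ is $\omega_1=\omega+\omega_2$ (this is exactly how the paper sets things up in its proof). So the first step should be that $g$ fixes $\omega_1$, and one must then argue separately that $g$ fixes $\omega$ and $\omega_2$ individually --- here the point is that the Virasoro vectors in the $2$-dimensional space $K(\g,k)_2$ are exactly $0,\omega,\omega_2,\omega_1$, and for $k\ge3$ the central charges of $\omega$ and $\omega_2$ differ, so they cannot be swapped. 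Relatedly, your claim that $\bar\omega$ ``generates $L^{Vir}$'' is imprecise: $\bar\omega$ is only a linear combination of $\omega$ and $\omega_2$.

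Second, and more seriously, your step ``$W^3$ and $\bar W^3$ have distinct conformal weights, hence cannot be mixed'' is not correct as stated (both have $L(0)$-weight $3$), and the subsequent cross-OPE constraint $\lambda\mu=1$ is asserted without any computation. This is the crux: from $\lambda^2=\mu^2=1$ you only get an embedding into $(\Z/2\Z)^2$, not $\Z/2\Z$, and it is precisely the coupling between the signs on $W^3$ and $\bar W^3$ that has to be established. You would need to exhibit an explicit mode product $W^3_{(n)}\bar W^3$ landing nontrivially in $\mathrm{span}\{\1,\omega,\omega_2\}$, or else a product of two elements of the form $\bar W^3-\alpha W^3$ hitting $W^3$; neither is obvious from the formulas \eqref{eq:w3}--\eqref{eq:W3'''}, and $\bar W^3$ is not an $L_2(0)$-eigenvector, so eigenspace separation alone does not give diagonal action. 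The paper's route bypasses this entirely: once $g$ fixes $\omega_2$ it preserves $K(sl_2,k)=\mathrm{Com}(L^{Vir},K(\g,k))$, and the question reduces to the already-known $\mathrm{Aut}(K(sl_2,k))$. If you want to salvage your strategy, the cleanest fix is exactly this commutant reduction rather than an OPE chase.
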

\begin{proof} From \cite{DLY2}, we know that Aut$(K(sl_2,k))$ is generated by $\sigma$ of order $2$ if $k\geq3$, where $\sigma$ is determined by $\sigma(h)=-h$, $\sigma(e)=f$, $\sigma(f)=e$. We claim that Aut$(K(\g,k))=\mbox{Aut}(K(sl_2,k))$, where $K(sl_2,k)$ is the parafermion vertex operator algebra associated to $\hat{sl_2}$. In fact, we first prove that Aut$(K(\g,k))$ is a subgroup of Aut$(K(sl_2,k))$. Note that from \cite{CL1}, we know that the coset $$Com(L_{\hat{sl_2}}(k,0), L_{\hat{\g}}(k,0))=L^{Vir}(c_{2k+3,k+2},0),$$ where $L^{Vir}(c_{2k+3,k+2},0)$ is the minimal Virasoro vertex operator algebra with central charge $c_{2k+3,k+2}=1-\frac{6(k+1)^{2}}{(2k+3)(k+2)}$. Note that
$V_{\mathbb{Z}\gamma}\subseteq L_{\hat{sl_2}}(k,0)\subseteq L_{\hat{\g}}(k,0)$, where $V_{\mathbb{Z}\gamma}$ is the lattice vertex operator algebra associated with a rank one lattice $\mathbb{Z}\gamma$ and $\langle\gamma,\gamma\rangle=2k$.
From Lemma 4.3 in \cite{DLY2} and the module decomposition of the vertex operator superalgebra $L_{\hat{\g}}(k,0)$ as $L_{\hat{sl_2}}(k,0)$-modules \cite{CFK}, we deduce that
$$K(\g,k)=K(sl_2,k)\otimes L^{Vir}(c_{2k+3,k+2},0)\bigoplus_{j=1}^{k-1}M^{2j,j}\otimes L^{Vir}(c_{2k+3,k+2},h_{2k+3,k+2}^{1,2j+1})$$
as a $K(sl_2,k)$-module, where $M^{i,j}$ for $0\leq i\leq k, 0\leq j\leq k-1$ are irreducible modules of $K(sl_2,k)$.
For any $\sigma\in \mbox{Aut}(K(\g,k))$, notice that the automorphism $\sigma$ preserves the weight space. This implies that
 $$\sigma(K(sl_2,k)\otimes L^{Vir}(c_{2k+3,k+2},0))=K(sl_2,k)\otimes L^{Vir}(c_{2k+3,k+2},0).$$ Let $\omega_{1}, \omega, \omega_{2}$ be the Virasoro elements of $K(\g,k), K(sl_2,k), L^{Vir}(c_{2k+3,k+2},0)$ respectively. We know that the weight $2$ subspace $(K(sl_2,k)\otimes L^{Vir}(c_{2k+3,k+2},0))_2$ is spanned by $\omega$ and $\omega_{2}$. Then together with the fact that Com$(K(sl_2,k),K(\g,k))=L^{Vir}(c_{2k+3,k+2},0)$, we deduce that $\sigma(\omega_{2})=\omega_{2}$. Since Com$(L^{Vir}(c_{2k+3,k+2},0),K(\g,k))=K(sl_2,k)$, it follows that for any $v\in K(sl_2,k)$,
 $\sigma(v)\in K(sl_2,k)$, that is, the restriction of $\sigma$ on $K(sl_2,k)$ is $K(sl_2,k)$. Thus Aut$(K(\g,k))$ is a subgroup of Aut$(K(sl_2,k))$.

 Next we construct an automorphism $\sigma$ of $K(\g,k)$ such that $\sigma^{2}=1$. Let $\sigma(e)=f, \sigma(f)=e, \sigma(h)=-h$, where $e,f,h$ is a $sl_2$-triple. We know that $\sigma$ can be lifted to the automorphism of $K(sl_2,k)$ such that $\sigma(\omega)=\omega, \sigma(W^{3})=W^{3}$, where $\omega$ and $W^{3}$ are the generators of $K(sl_2,k)$ with weight $2$ and $3$ respectively. Notice that
 \begin{eqnarray*}
y(-1)x(-1)\1+x(-1)y(-1)\1=h(-2)\1.
\end{eqnarray*}
Then we have
\begin{eqnarray}
\sigma(y(-1)x(-1)\1+x(-1)y(-1)\1)=-y(-1)x(-1)\1-x(-1)y(-1)\1.\label{eq:5.1}
\end{eqnarray}
Since Com$(K(sl_2,k),K(\g,k))=L^{Vir}(c_{2k+3,k+2},0)$ and $\omega_1=\omega+\omega_2$, it follows that $\sigma(\omega_{2})=\omega_{2}$. Then
we have \begin{eqnarray}
\sigma(x(-1)y(-1)\1-y(-1)x(-1)\1)=x(-1)y(-1)\1-y(-1)x(-1)\1.\label{eq:5.2}
\end{eqnarray}
From (\ref{eq:5.1}) and (\ref{eq:5.2}), we have
\begin{eqnarray*}
\sigma(y(-1)x(-1)\1)=-x(-1)y(-1)\1-y(-1)x(-1)\1, \sigma(x(-1)y(-1)\1)=-y(-1)x(-1)\1.
\end{eqnarray*}
This shows that $\sigma(\omega)=\omega,\sigma(\bar{\omega})=\bar{\omega}$, $\sigma(W^{3})=-W^{3},\sigma(\bar{W^3})=-\bar{W^3}$, where $\omega,\bar{\omega},W^{3},\bar{W^3}$
are generators of $K(\g,k)$ given in (\ref{eq:w3})-(\ref{eq:W3'''}). Thus $\sigma$ lifts to an automorphism of $K(\g,k)$ and $\sigma^{2}=1$. So we have Aut$(K(\g,k))=\mbox{Aut}(K(sl_2,k))=\mathbb{Z}/2\mathbb{Z}$ if $k\geq3$.

\end{proof}

\begin{rmk}  From the proof of Theorem \ref{auto.}, we see that Aut$(K(\g,k))=\mbox{Aut}(K(sl_2,k))$ for any positive integer $k$. Thus the automorphism group Aut$(K(\g,k))=\{1\}$ is trivial for $k=1$ and $k=2$. Therefore, we only need to consider the orbifold of parafermion vertex operator algebra under the automorphism $\sigma$ for $k\geq 3$. So we first need to understand the orbifold affine vertex operator superalgebra $L_{\hat{g}}(k,0)^{\sigma}$, i.e., the fixed-point vertex operator subalgebra of $L_{\hat{g}}(k,0)$ under the automorphism $\sigma$ of order $4$. We will study modules and fusion rules of $K(\g, k)^{\sigma}$ in subsequent papers.
\end{rmk}

\section{Classification of the irreducible modules of $L_{\hat{g}}(k,0)^{\sigma}$
}\label{Sect:affine case}\def\theequation{3.\arabic{equation}}

In this section, we classify all the irreducible modules of the orbifold vertex operator algebra $L_{\hat{g}}(k,0)^{\sigma}$.
 We first recall the definition of weak $g$-twisted modules, $g$-twisted modules and admissible $g$-twisted modules following \cite{DLM3, DLM4}. Let $L_{\hat{g}}(k,0)^{\sigma}$ be the orbifold vertex operator subalgebra of the affine vertex operator superalgebra $L_{\hat{g}}(k,0)$, i.e., the fixed-point subalgebra of $L_{\hat{g}}(k,0)$ under $\sigma$. We then classify the irreducible modules for $L_{\hat{g}}(k,0)^{\sigma}$. Furthermore, we determine the contragredient modules of irreducible $L_{\hat{g}}(k,0)^{\sigma}$-modules.

Let $\left(V,Y,1,\omega\right)$ be a vertex operator algebra (see
\cite{FLM}, \cite{LL}) and $g$ an automorphism of $V$ with finite order $T$.  Let $W\left\{ z\right\} $
denote the space of $W$-valued formal series in arbitrary complex
powers of $z$ for a vector space $W$. Denote the decomposition of $V$ into eigenspaces with respect to the action of $g$ by $$V=\bigoplus_{r\in\Z}V^{r},$$
where $V^{r}=\{v\in V|\ gv=e^{-\frac{2\pi ir}{T}}v\},$ $i=\sqrt{-1}$.

\begin{definition}A \emph{weak $g$-twisted $V$-module} $M$ is
a vector space with a linear map
\[
Y_{M}:V\to\left(\text{End}M\right)\{z\}
\]

\[
v\mapsto Y_{M}\left(v,z\right)=\sum_{n\in\mathbb{Q}}v_{n}z^{-n-1}\ \left(v_{n}\in\mbox{End}M\right)
\]
which satisfies the following conditions for $0\leq r\leq T-1$, $u\in V^{r}\ ,v\in V, w\in M$:

\[ Y_{M}\left(u,z\right)=\sum_{n\in\frac{r}{T}+\mathbb{Z}}u_{n}z^{-n-1}
\]

\[
u_{n}w=0\ {\rm for} \ n\gg0,
\]

\[
Y_{M}\left(\mathbf{1},z\right)=Id_{M},
\]

\[
z_{0}^{-1}\text{\ensuremath{\delta}}\left(\frac{z_{1}-z_{2}}{z_{0}}\right)Y_{M}\left(u,z_{1}\right)
Y_{M}\left(v,z_{2}\right)-z_{0}^{-1}\delta\left(\frac{z_{2}-z_{1}}{-z_{0}}\right)Y_{M}\left(v,z_{2}\right)Y_{M}\left(u,z_{1}\right)
\]

\[
=z_{1}^{-1}\left(\frac{z_{2}+z_{0}}{z_{1}}\right)^{\frac{r}{T}}\delta\left(\frac{z_{2}+z_{0}}{z_{1}}\right)Y_{M}\left(Y\left(u,z_{0}\right)v,z_{2}\right),
\]
 where $\delta\left(z\right)=\sum_{n\in\mathbb{Z}}z^{n}$. \end{definition}
 The following identities are the consequences of the twisted-Jacobi identity \cite{DLM3} (see also \cite{Ab}, \cite{DJ}).
 \begin{eqnarray}[u_{m+\frac{r}{T}},v_{n+\frac{s}{T}}]=\sum_{i=0}^{\infty}\binom{m+\frac{r}{T}}{i} (u_{i}v)_{m+n+\frac{r+s}{T}-i},\label{eq:3.1.}\end{eqnarray}
 \begin{eqnarray}\sum_{i\geq 0}\binom{\frac{r}{T}}{i}(u_{m+i}v)_{n+\frac{r+s}{T}-i}=\sum_{i\geq 0}(-1)^{i}\binom{m}{i}(u_{m+\frac{r}{T}-i}v_{n+\frac{s}{T}+i}-(-1)^{m}v_{m+n+\frac{s}{T}-i}u_{\frac{r}{T}+i}),\label{eq:3.2.}\end{eqnarray}
 where $u\in V^{r}, \ v\in V^{s},\ m,n\in \Z$.

\begin{definition}

A \emph{$g$-twisted $V$-module} is a weak $g$-twisted $V$-module\emph{
}$M$ which carries a $\mathbb{C}$-grading $M=\bigoplus_{\lambda\in\mathbb{C}}M_{\lambda},$
where $M_{\lambda}=\{w\in M|L(0)w=\lambda w\}$ and $L(0)$ is one of the coefficient operators of $Y(\omega,z)=\sum_{n\in\mathbb{Z}}L(n)z^{-n-2}.$
Moreover we require
that $\dim M_{\lambda}$ is finite and for fixed $\lambda,$ $M_{\lambda+\frac{n}{T}}=0$
for all small enough integers $n.$

\end{definition}

\begin{definition}An \emph{admissible $g$-twisted $V$-module} $M=\oplus_{n\in\frac{1}{T}\mathbb{Z}_{+}}M\left(n\right)$
is a $\frac{1}{T}\mathbb{Z}_{+}$-graded weak $g$-twisted module
such that $u_{m}M\left(n\right)\subset M\left(\mbox{wt}u-m-1+n\right)$
for homogeneous $u\in V$ and $m,n\in\frac{1}{T}\mathbb{Z}.$ $ $

\end{definition}

If $g=Id_{V}$, we have the notions of weak, ordinary and admissible
$V$-modules \cite{DLM3}.

\begin{definition}A vertex operator algebra $V$ is called \emph{$g$-rational}
if the admissible $g$-twisted module category is semisimple. \end{definition}

The following lemma about $g$-rational vertex operator algebras is
well known \cite{DLM3}.

\begin{lem} If $V$ is  $g$-rational, then

(1) Any irreducible admissible $g$-twisted $V$-module $M$ is a $g$-twisted $V$-module, and there exists a $\lambda\in\mathbb{C}$
such that $M=\oplus_{n\in\frac{1}{T}\mathbb{Z}_{+}}M_{\lambda+n}$
where $M_{\lambda}\neq0.$ And $\lambda$ is called the conformal weight
of $M;$

(2) There are only finitely many irreducible admissible $g$-twisted
$V$-modules up to isomorphism. \end{lem}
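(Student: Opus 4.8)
The plan is to reduce the statement to the representation theory of the twisted Zhu algebra $A_g(V)$, following Dong--Li--Mason \cite{DLM3}. First I would recall the construction of the associative algebra $A_g(V)$ attached to the pair $(V,g)$ and the bottom-level functor $M\mapsto M(0)$ from admissible $g$-twisted $V$-modules to $A_g(V)$-modules; for an irreducible admissible module, after normalizing the $\tfrac1T\mathbb{Z}_+$-grading so that $M(0)\neq 0$, the space $M(0)$ becomes an irreducible $A_g(V)$-module, and $M\mapsto M(0)$ induces an injection --- in fact a bijection, with inverse built from a generalized Verma construction $\bar M(U)$ and passage to its unique irreducible graded quotient --- between isomorphism classes of irreducible admissible $g$-twisted $V$-modules and isomorphism classes of irreducible $A_g(V)$-modules. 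The key input, which is where the hypothesis is used, is that $g$-rationality of $V$ forces $A_g(V)$ to be a finite-dimensional semisimple associative algebra: semisimplicity is obtained by transporting the semisimplicity of the admissible $g$-twisted module category through the functor, and finite-dimensionality is the genuinely technical point, proved by realizing $A_g(V)$ inside a suitable admissible $g$-twisted module and showing the latter is finitely generated.

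Granting $A_g(V)\cong\bigoplus_{i=1}^m M_{n_i}(\mathbb{C})$ by Wedderburn's theorem, part (2) is immediate: such an algebra has exactly $m$ isomorphism classes of irreducible modules, so there are at most $m$ (in fact exactly $m$) isomorphism classes of irreducible admissible $g$-twisted $V$-modules.

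For part (1), let $M$ be irreducible admissible with $M(0)\neq 0$. Since $M(0)$ is irreducible over the finite-dimensional algebra $A_g(V)$, it is finite-dimensional. I would then record the commutation relation $[L(0),a_n]=(\mathrm{wt}\,a-n-1)\,a_n$ for homogeneous $a\in V$, valid on any admissible $g$-twisted module; it follows at once from $Y_M(L(0)a,z)=(\mathrm{wt}\,a)Y_M(a,z)$ together with $[L(0),Y_M(a,z)]=Y_M(L(0)a,z)+z\tfrac{d}{dz}Y_M(a,z)$, which in turn comes from the twisted Jacobi identity. This relation shows both that $L(0)=\omega_1$ preserves every graded piece $M(n)$ and that the sum $N$ of all $L(0)$-eigenspaces of $M$ is a $V$-submodule. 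As $\dim M(0)<\infty$, $L(0)$ has an eigenvector in $M(0)$, so $N\neq 0$, whence $N=M$ by irreducibility; thus $L(0)$ acts semisimply on $M$. Writing $\lambda$ for the eigenvalue of $L(0)$ on a chosen nonzero vector $v\in M(0)$, and using that $M$ is generated by $v$ while each $a_n$ (for homogeneous $a$) shifts the $\tfrac1T\mathbb{Z}_+$-grading and the $L(0)$-weight by the same amount $\mathrm{wt}\,a-n-1$ (with terms landing in negative degree vanishing since $M(n)=0$ for $n<0$), one deduces $M_{\lambda+n}\subseteq M(n)$ for all $n$; combined with the decomposition $M(n)=\bigoplus_\mu(M(n)\cap M_\mu)$ this forces $M_{\lambda+n}=M(n)$ and hence $M=\bigoplus_{n\in\frac1T\mathbb{Z}_+}M_{\lambda+n}$ with $M_\lambda=M(0)\neq 0$. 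Finally $\dim M_{\lambda+n}=\dim M(n)<\infty$ follows, as before, from the finiteness packaged into $g$-rationality --- concretely $M(n)$ is a module over a higher twisted Zhu algebra $A_{g,n}(V)$, which is finite-dimensional --- so $M$ is an ordinary $g$-twisted module with conformal weight $\lambda$.

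I expect the main obstacle to be the finite-dimensionality of $A_g(V)$ (equivalently, the finite-dimensionality of each homogeneous component of an irreducible admissible $g$-twisted module): this is the step that really uses $g$-rationality and requires the full machinery of the (higher) twisted Zhu algebras, whereas the semisimplicity of the $L(0)$-action and the precise shape of the grading are then formal consequences of the twisted Jacobi identity and the irreducibility of $M$.
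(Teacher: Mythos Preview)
The paper does not give a proof of this lemma: it is stated as ``well known'' and attributed to \cite{DLM3}. Your sketch is precisely the argument of Dong--Li--Mason in that reference---construction of the twisted Zhu algebra $A_g(V)$, the bijection between irreducible admissible $g$-twisted modules and irreducible $A_g(V)$-modules, and the fact that $g$-rationality forces $A_g(V)$ to be finite-dimensional semisimple---so your approach is correct and coincides with the intended one. One minor anachronism: the higher twisted Zhu algebras $A_{g,n}(V)$ you invoke for the finite-dimensionality of $M(n)$ were developed in later work; in \cite{DLM3} the finiteness of the graded pieces is obtained more directly from the fact that $V$ has finite-dimensional weight spaces and $M$ is a quotient of the generalized Verma module $\bar M(U)$ built from the finite-dimensional bottom level $U=M(0)$, whose graded pieces are manifestly finite-dimensional. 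This does not affect the validity of your outline.
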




Let $M=\bigoplus_{n\in\frac{1}{T}\mathbb{Z}_{+}}M(n)$ be an admissible $g$-twisted $V$-module, the contragredient module $M^{'}$ is defined as follows: $M'=\bigoplus_{n\in\frac{1}{T}\mathbb{Z}_{+}}M(n)^{*}$, where $M(n)^{*}=\mbox{Hom}_{\mathbb{C}}(M(n),\mathbb{C}).$
The vertex
operator $Y_{M'}(v,z)$ is defined for $v\in V$ via
\begin{eqnarray}\label{contragredient}
\langle Y_{M'}(v,z)f,u\rangle=\langle f,Y_{M}(e^{zL(1)}(-z^{-2})^{L(0)}v,z^{-1})u\rangle,\label{eq:3.0}
\end{eqnarray}
where $\langle f,w\rangle=f(w)$ is the natural paring $M'\times M\to\mathbb{C}.$

\begin{rmk} $(M^{'}, Y_{M^{'}})$ is an admissible $g^{-1}$-twisted $V$-module \cite{FHL}. One can also define the contragredient module $M^{'}$ for a $g$-twisted $V$-module $M$. In this case, $M^{'}$ is a $g^{-1}$-twisted $V$-module. Moreover, $M$ is irreducible if and only if $M^{'}$ is irreducible.
\end{rmk}

  Recall that $L(k,i)$ for $0\leq i\leq k$ be all the irreducible modules for the rational vertex operator algebra $L_{\hat{sl_{2}}}(k,0)$. From \cite{CFK}, we know that the vertex operator superalgebra $L_{\hat{g}}(k,0)=L^{even}_{\hat{g}}(k,0)\oplus L^{odd}_{\hat{g}}(k,0)$, where $L^{even}_{\hat{g}}(k,0)$ is its even subalgebra which is simple and $L^{odd}_{\hat{g}}(k,0)$ is an order two simple current with $$L^{even}_{\hat{g}}(k,0)=\bigoplus_{i=0, \ i\ even}^{k}L(k,i)\otimes V_{i+1,1} $$and
  $$L^{odd}_{\hat{g}}(k,0)=\bigoplus_{i=0, \ i\ odd}^{k}L(k,i)\otimes V_{i+1,1}.$$
From the definition of the automorphism $\sigma$, we see that $L_{\hat{g}}(k,0)^{\sigma}=L^{even}_{\hat{g}}(k,0)^{\sigma}$, and $\sigma$ is the automorphism of vertex operator algebra $L^{even}_{\hat{g}}(k,0)$ with order $2$. Thus the study of orbifold theory of vertex operator superalgebra $L_{\hat{g}}(k,0)$ under $\sigma$ is equivalent to the study of the $\mathbb{Z}_{2}$-orbifold vertex operator algebra $L^{even}_{\hat{g}}(k,0)^{\sigma}$.

 The following theorem gives the classification of the irreducible modules of $L^{even}_{\hat{g}}(k,0)$.

 \begin{thm}\cite{CFK}\label{thm:affine'} The modules
 \begin{eqnarray}\label{decomposition}
 M_{r}^{even}=\bigoplus_{i=0, \ i\ even}^{k}L(k,i)\otimes V_{i+1,r} \  \mbox{and} \ \ M_{r}^{odd}=\bigoplus_{i=0, \ i\ odd}^{k}L(k,i)\otimes V_{i+1,r}
  \end{eqnarray} for $1\leq r\leq 2k+2$ form a complete list of inequivalent simple modules of the vertex operator algebra $L^{even}_{\hat{g}}(k,0)$.

 \end{thm}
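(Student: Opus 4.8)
The plan is to view $L^{even}_{\hat{g}}(k,0)$ as an extension of the rational, $C_2$-cofinite vertex operator algebra $U:=L_{\hat{sl_2}}(k,0)\otimes L^{Vir}(c_{2k+3,k+2},0)$ and to classify its irreducible modules through the representation theory of $U$. Since $L_{\hat{sl_2}}(k,0)\otimes L^{Vir}(c_{2k+3,k+2},0)$ is a conformal subalgebra of $L^{even}_{\hat{g}}(k,0)$ and the decomposition recalled above presents $L^{even}_{\hat{g}}(k,0)$ as a finite direct sum of irreducible $U$-modules, $L^{even}_{\hat{g}}(k,0)$ is rational and $C_2$-cofinite; in particular it has only finitely many irreducible modules, all of them ordinary, and any irreducible $L^{even}_{\hat{g}}(k,0)$-module restricts to a finite direct sum of the irreducible $U$-modules $L(k,i)\otimes V_{r,s}$ with $0\le i\le k$, $1\le r\le k+1$, $1\le s\le 2k+2$.

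Next I would use the tensor-categorical theory of vertex operator algebra extensions (Kirillov--Ostrik, Huang--Kirillov--Lepowsky, Creutzig--Kanade--McRae). Here $\mathrm{Rep}(U)$ is a modular tensor category, namely the Deligne product of the level-$k$ $\widehat{sl_2}$ category and the $(2k+3,k+2)$ Virasoro minimal model category, and $A:=L^{even}_{\hat{g}}(k,0)=\bigoplus_{i\ \mathrm{even}}L(k,i)\otimes V_{i+1,1}$ is a simple connected commutative algebra object in $\mathrm{Rep}(U)$, with trivial twist since every summand has integral conformal weight $h(k,i)+h^{i+1,1}_{2k+3,k+2}=\tfrac{i(i+1)}2$. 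Hence $\mathrm{Rep}(L^{even}_{\hat{g}}(k,0))$ is equivalent to the category of local $A$-modules, its irreducible objects are the irreducible local $A$-modules, and by Frobenius reciprocity each of them is a summand of an induced module $\mathcal F_A(L(k,i_0)\otimes V_{r_0,s_0})=A\boxtimes_U(L(k,i_0)\otimes V_{r_0,s_0})$. The classification thus reduces to computing these inductions, selecting the local ones, decomposing them into irreducibles, and discarding repetitions.

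These are computations with the (truncated-tensor-product) fusion rules of the two tensor factors and the conformal-weight formulas $h(k,i)=\tfrac{i(i+2)}{4(k+2)}$ and $h^{r,s}_{2k+3,k+2}$. For example $\mathcal F_A(L(k,0)\otimes V_{1,r})$ has underlying $U$-module $\bigoplus_{i\ \mathrm{even}}(L(k,i)\boxtimes L(k,0))\boxtimes(V_{i+1,1}\boxtimes V_{1,r})=\bigoplus_{i\ \mathrm{even}}L(k,i)\otimes V_{i+1,r}=M_r^{even}$, using that $L(k,0)$ and $V_{1,1}$ are the fusion units and that fusing with $V_{1,r}$ leaves the first Kac index unchanged; the modules $M_r^{odd}$ arise from the same computation applied to an induction off the simple current $L(k,k)$ (and, when $k$ is even, from decomposing an induction off an odd $\widehat{sl_2}$-label). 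Equivalently, both families can be read off from the $U$-module decompositions of the irreducible ordinary and $\sigma$-twisted modules of $L_{\hat{g}}(k,0)$ furnished by the character identities of \cite{CFK}. Locality of $\mathcal F_A(X)$ is the triviality of its monodromy with $A$, which, as the twist of $A$ is trivial, requires every $U$-constituent of $\mathcal F_A(X)$ to have conformal weight congruent modulo $\Z$ to that of $X$; for $X=L(k,0)\otimes V_{1,r}$ this reduces to $\tfrac{i(i+2-r)}2\in\Z$ for every even $i$, which holds automatically, so $M_r^{even}$ is a genuine $L^{even}_{\hat{g}}(k,0)$-module for all $r$, and the analogous verification handles $M_r^{odd}$. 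Running this over all irreducible $U$-modules shows the local inductions are exhausted by the two displayed families. Finally, the Kac-table symmetry $V_{r',s'}\cong V_{q-r',p-s'}$ with $(p,q)=(2k+3,k+2)$ confines the parameter to $1\le r\le 2k+2$ and accounts for any coincidences within the list, the even/odd split survives because fusion preserves the parity of the $\widehat{sl_2}$-label, and two distinct listed modules are inequivalent because their restrictions to $U$ have disjoint composition factors.

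The main obstacle I anticipate is this middle step: the bookkeeping of conformal weights needed to decide exactly which inductions are local, and the proof that each local induced module is irreducible rather than merely indecomposable. For the latter one can use rigidity of $\mathrm{Rep}(U)$ together with a quantum-dimension count to constrain how $\mathcal F_A(X)$ decomposes, pinning down the irreducible constituents by comparison with the candidate list, or argue directly that the top space of each $M_r^{even}$, $M_r^{odd}$ is an irreducible module for the Zhu algebra of $L^{even}_{\hat{g}}(k,0)$. Completeness of the list is then automatic, since $L^{even}_{\hat{g}}(k,0)$ is rational, the two families realize every irreducible local $A$-module, and no two of the listed modules are isomorphic.
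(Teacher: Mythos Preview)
The paper does not prove this theorem at all; it is simply quoted from \cite{CFK}, so there is no ``paper's own proof'' to compare against. Your sketch is in fact a faithful outline of the argument in \cite{CFK}: one regards $A=L^{even}_{\hat g}(k,0)$ as a commutative algebra object in the modular tensor category $\mathrm{Rep}\big(L_{\hat{sl_2}}(k,0)\otimes L^{Vir}(c_{2k+3,k+2},0)\big)$ via the Huang--Kirillov--Lepowsky/Creutzig--Kanade--McRae extension theory, and then classifies irreducible $A$-modules as irreducible local $A$-modules, computing inductions with the known $\hat{sl}_2$ and Virasoro fusion rules. Your conformal-weight computations (in particular $h(k,i)+h^{i+1,1}_{2k+3,k+2}=\tfrac{i(i+1)}{2}$ and the locality check $\tfrac{i(i+2-r)}{2}\in\Z$ for $i$ of fixed parity) are correct.

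Two small points. First, the ``main obstacle'' you flag---irreducibility of the local induced modules---is handled in \cite{CFK} precisely by the quantum-dimension/simple-current argument you allude to: $A$ is a simple-current extension of $U$ (each summand $L(k,i)\otimes V_{i+1,1}$ with $i$ even is invertible in $\mathrm{Rep}(U)$), and for simple-current extensions the induction functor sends simples to simples or to zero, so no separate Zhu-algebra argument is needed. Second, your description of how $M_r^{odd}$ arises is slightly off: one obtains it by inducing any $L(k,i_0)\otimes V_{i_0+1,r}$ with $i_0$ odd (for instance $i_0=1$), not specifically from the simple current $L(k,k)$, whose parity depends on $k$; alternatively, once the $M_r^{even}$ are in hand, $M_r^{odd}=L^{odd}_{\hat g}(k,0)\boxtimes_A M_r^{even}$ using that $L^{odd}_{\hat g}(k,0)$ is an order-two simple current for $A$.
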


Note that $L_{\hat{sl_{2}}}(k,0)^{\sigma}$ is the subalgebra of $L^{even}_{\hat{g}}(k,0)^{\sigma}$. The irreducible modules of $L_{\hat{sl_{2}}}(k,0)^{\sigma}$ are classified in \cite{JW2}.

\begin{thm}\cite{JW2}\label{thm:affine sl2}
There are $4k+4$ inequivalent irreducible modules of $L_{\hat{sl_{2}}}(k,0)^{\sigma}$.
  More precisely, the set
 \begin{eqnarray*}
&& \{ L(k,i)^{+}, L(k,i)^{-}, \overline{L(k,i)}^{+}, \overline{L(k,i)}^{-} \ \mbox{for} \ 0\leq i\leq k\} \\
 \end{eqnarray*} gives all inequivalent irreducible $L_{\hat{sl_{2}}}(k,0)^{\sigma}$-modules.
\end{thm}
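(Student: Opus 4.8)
The plan is to place $L_{\hat{sl_{2}}}(k,0)^{\sigma}$ in the framework of $\mathbb{Z}_2$-orbifold theory and to exhibit the $4k+4$ modules as the irreducible constituents of the untwisted and the $\sigma$-twisted irreducible $L_{\hat{sl_{2}}}(k,0)$-modules. Since $L_{\hat{sl_{2}}}(k,0)$ is rational and $C_2$-cofinite and $\sigma$ has order $2$, the fixed-point subalgebra $L_{\hat{sl_{2}}}(k,0)^{\sigma}$ is again rational and $C_2$-cofinite, and by the orbifold theory for rational $C_2$-cofinite vertex operator algebras every irreducible $L_{\hat{sl_{2}}}(k,0)^{\sigma}$-module appears as a constituent of some irreducible $\sigma^{\epsilon}$-twisted $L_{\hat{sl_{2}}}(k,0)$-module with $\epsilon\in\{0,1\}$ \cite{DLM3}. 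Thus the classification reduces to understanding the untwisted sector $(\epsilon=0)$ and the twisted sector $(\epsilon=1)$ and decomposing each irreducible module there under the fixed-point subalgebra.

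For the untwisted sector I would first observe that $\sigma\colon h\mapsto -h,\ e\mapsto f,\ f\mapsto e$ is an \emph{inner} automorphism of $sl_2$: it is conjugation by an element of $SL_2$, with fixed-point subalgebra the Cartan $\mathbb{C}(e+f)$, and hence fixes the isomorphism class of every finite-dimensional irreducible $sl_2$-module. Lifting to the affine picture gives $L(k,i)\circ\sigma\cong L(k,i)$ for all $0\le i\le k$, so each $L(k,i)$ is $\sigma$-stable. By quantum Galois theory for fixed-point subalgebras (Dong--Mason), the canonical lift $\tilde{\sigma}$ of $\sigma$ to $L(k,i)$ satisfies $\tilde{\sigma}^{2}=1$ and decomposes $L(k,i)=L(k,i)^{+}\oplus L(k,i)^{-}$ into its two eigenspaces, each an irreducible $L_{\hat{sl_{2}}}(k,0)^{\sigma}$-module, with the two inequivalent. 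This produces the $2(k+1)$ modules $L(k,i)^{\pm}$.

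For the twisted sector I would again exploit that $\sigma$ is inner. Writing $\sigma=\exp(2\pi\sqrt{-1}\,x_0)$ for a suitable semisimple $x_0$ in the $\sigma$-fixed Cartan $\mathbb{C}(e+f)$, Li's construction of twisted modules via the shift operator $\Delta(x_0,z)$ identifies the category of $\sigma$-twisted $L_{\hat{sl_{2}}}(k,0)$-modules with the untwisted module category; equivalently, the twisted affine Lie algebra attached to $(sl_2,\sigma)$ is isomorphic to $\hat{sl_2}$ because $\sigma$ is inner. Either route yields exactly $k+1$ irreducible $\sigma$-twisted modules $\overline{L(k,i)}$, $0\le i\le k$, and completeness of this list also follows from the orbifold count that the number of irreducible $\sigma$-twisted modules equals the number of $\sigma$-stable irreducible untwisted modules, which here is $k+1$. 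Each $\overline{L(k,i)}$ is $\sigma$-stable, so the same eigenspace decomposition under the canonical lift gives two inequivalent irreducible $L_{\hat{sl_{2}}}(k,0)^{\sigma}$-modules $\overline{L(k,i)}^{\pm}$, a further $2(k+1)$ modules.

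Finally I would verify that the resulting $4(k+1)=4k+4$ modules are pairwise inequivalent and that the list is exhaustive. Within each sector, distinct values of $i$ are separated by the $sl_2$-action and the $L(0)$-spectrum on the top level, and the two members of each $\pm$ pair are separated by the $\tilde{\sigma}$-eigenvalue; the untwisted-type and twisted-type modules are separated by their conformal weights modulo $\mathbb{Z}$, since the half-integral twist shifts the $L(0)$-spectrum of the twisted modules. Exhaustiveness is then immediate from the orbifold theory of the first step. The main obstacle is the twisted sector: constructing the $\sigma$-twisted modules explicitly, proving that this list of $k+1$ is complete, and checking that for each of them the canonical lift $\tilde{\sigma}$ genuinely occurs with both eigenvalues, so that every twisted module really splits into two nonzero irreducible pieces.
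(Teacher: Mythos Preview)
The paper does not prove this statement: Theorem~\ref{thm:affine sl2} is quoted verbatim from \cite{JW2} with no argument given, so there is no in-paper proof to compare your proposal against. Your outline is the standard one and is correct in its essentials; it is also exactly the template the present paper follows when proving the analogous results for $L_{\hat{\g}}(k,0)^{\sigma}$ (see Proposition~\ref{prop:twisted1}, Lemma~\ref{lem:action.}, and Theorem~\ref{thm:affinetwist'}), namely: realize $\sigma$ as the inner automorphism $e^{2\pi\sqrt{-1}h''(0)}$, use Li's $\Delta$-operator \cite{L2} to construct the $\sigma$-twisted modules, count them via the $\sigma$-stable untwisted modules \cite{DLM4}, and then split each (twisted or untwisted) module into eigenspaces under the canonical lift using \cite{DM1}. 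The one point you flag as the ``main obstacle''---that both eigenvalues genuinely occur in each twisted module---is handled in \cite{JW2} (and mirrored here in Lemma~\ref{lem:action.}) by explicitly locating vectors such as $\eta_i$ and $(e-f)_{-1/2}\eta_i$ in $\overline{L(k,i)}$ with opposite $\tilde\sigma$-eigenvalues; your sketch would benefit from naming such vectors rather than leaving this step abstract.
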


\begin{rmk}\label{rmk:affine sl2} With the notations in  Theorem \ref{thm:affine sl2}, we call $L(k,i)^{+}, L(k,i)^{-}$ untwisted type modules and  $\overline{L(k,i)}^{+}, \overline{L(k,i)}^{-} $ twisted type modules respectively.

\end{rmk}
Since $L_{\hat{g}}^{even}(k,0)$ is a rational vertex operator algebra, $L_{\hat{g}}^{even}(k,0)^{\sigma}$ is $C_2$-cofinite and rational \cite{M2}, \cite{CM}, \cite{CKLR}, it follows that $L_{\hat{g}}^{even}(k,0)$ is $\sigma$-rational \cite{DH}. Then from \cite{DLM4}, we have the following result.

\begin{prop}\label{prop:twisted1}
 There are precisely $4k+4$ inequivalent irreducible $\sigma$-twisted modules of $L^{even}_{\hat{g}}(k,0)$ as follows: \begin{eqnarray}\label{decompositiontwist}
 \overline{M_{r}^{even}}=\bigoplus_{i=0, \ i\ even}^{k}\overline{L(k,i)}\otimes V_{i+1,r} \  \mbox{and} \ \ \overline{M_{r}^{odd}}=\bigoplus_{i=0, \ i\ odd}^{k}\overline{L(k,i)}\otimes V_{i+1,r}
  \end{eqnarray} for $1\leq r\leq 2k+2$, where $\overline{L(k,i)}$ for $0\leq i\leq k$ are irreducible $\sigma$-twisted modules of $L_{\hat{sl_2}}(k,0)$.
\end{prop}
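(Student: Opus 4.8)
The plan is to combine the decomposition of $L^{even}_{\hat{g}}(k,0)$ as an $L_{\hat{sl_2}}(k,0)\otimes L^{Vir}(c_{2k+3,k+2},0)$-module with the classification of the irreducible $\sigma$-twisted $L_{\hat{sl_2}}(k,0)$-modules from \cite{JW2}. First I would invoke the rationality and $C_2$-cofiniteness of $L^{even}_{\hat{g}}(k,0)$ (Theorem \ref{thm:affine'} and the cited references \cite{M2,CM,CKLR}), so that by \cite{DH} it is $\sigma$-rational; hence by Lemma on $g$-rational VOAs there are only finitely many irreducible $\sigma$-twisted modules, each an honest $\sigma$-twisted module with a well-defined conformal weight, and it suffices to produce a complete list. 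The key structural input is that $\sigma$ acts on $L^{even}_{\hat{g}}(k,0)=\bigoplus_{i\ \mathrm{even}}L(k,i)\otimes V_{i+1,1}$ through its action on the $L_{\hat{sl_2}}(k,0)$-factor only, since $\sigma$ fixes the Virasoro element $\omega_2$ of the coset and fixes $L^{Vir}(c_{2k+3,k+2},0)$ pointwise (this was established in the proof of Theorem \ref{auto.}). Therefore a $\sigma$-twisted $L^{even}_{\hat{g}}(k,0)$-module is built by tensoring a $\sigma$-twisted $L_{\hat{sl_2}}(k,0)$-module with an (untwisted) $L^{Vir}(c_{2k+3,k+2},0)$-module, and the $\sigma$-twisted modules of $L_{\hat{sl_2}}(k,0)$ are the $\overline{L(k,i)}$, $0\le i\le k$, by \cite{JW2}.

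Next I would carry out the counting and the gluing. The twisted modules $\overline{L(k,i)}$ of $L_{\hat{sl_2}}(k,0)$ are obtained from $L(k,i)$ by the standard twist construction, and their top levels carry the appropriate representations of the twisted affine algebra; the Virasoro minimal model modules $V_{i+1,r}$ ($1\le r\le 2k+2$) combine with $L(k,i)$ exactly as in the untwisted case recorded in Theorem \ref{thm:affine'}, because the branching of $L_{\hat{osp(1|2)}}(k,0)$-modules over $L_{\hat{sl_2}}(k,0)\otimes L^{Vir}$ from \cite{CFK,CL,Dra} is compatible with twisting in the $sl_2$-direction. This produces the candidate modules $\overline{M^{even}_r}$ and $\overline{M^{odd}_r}$ of (\ref{decompositiontwist}). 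To see these are irreducible and pairwise inequivalent, I would restrict to $L_{\hat{sl_2}}(k,0)^{\sigma}\otimes L^{Vir}(c_{2k+3,k+2},0)$ and use the known irreducibility/inequivalence of the constituents $\overline{L(k,i)}^{\pm}\otimes V_{i+1,r}$ together with the simple-current extension structure (one passes from the $+/-$ pieces to the full $\overline{M_r}$ by the order-two simple current $L^{odd}_{\hat{g}}(k,0)$ acting). Finally, counting: the even/odd split gives, for each fixed $r\in\{1,\dots,2k+2\}$, one module $\overline{M^{even}_r}$ and one module $\overline{M^{odd}_r}$, hence $2(2k+2)=4k+4$ modules, matching the number $4k+4$ of irreducible $\sigma$-twisted $L_{\hat{sl_2}}(k,0)$-modules in Theorem \ref{thm:affine sl2} in a way that is consistent with \cite{DLM4}'s formula (the number of $g$-twisted modules equals the number of $g$-stable ordinary modules, suitably counted via the $S$-matrix / quantum dimension bookkeeping for the simple current extension $L_{\hat{g}}^{even}(k,0)\supset L_{\hat{sl_2}}(k,0)\otimes L^{Vir}$). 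That the count is exactly $4k+4$ and not more is what closes the classification.

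The main obstacle I expect is the bookkeeping in the last step: rigorously matching the list of candidate twisted modules with the count predicted by \cite{DLM4} for the rational VOA $L^{even}_{\hat{g}}(k,0)$, i.e. showing no twisted modules are missed and none of the $\overline{M^{even}_r},\overline{M^{odd}_r}$ coincide. This requires (a) checking that the $r$-index genuinely ranges over $1\le r\le 2k+2$ with no identifications beyond those already present in Theorem \ref{thm:affine'}, using the fusion/simple-current structure of $L^{odd}_{\hat{g}}(k,0)$; and (b) confirming, via the decomposition over $L_{\hat{sl_2}}(k,0)^{\sigma}$ and Theorem \ref{thm:affine sl2}, that each $\overline{M^{even}_r}$ (resp.\ $\overline{M^{odd}_r}$) restricts to a direct sum of the $\overline{L(k,i)}^{\pm}\otimes V_{i+1,r}$ over the appropriate parity of $i$, so that inequivalence for distinct $r$ (and distinct parity) follows from inequivalence of these restrictions. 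The analysis of the lowest weights in the top levels — exactly the tool the introduction says is used for Theorem \ref{thm:affinetwist'} — is what makes (b) effective: distinct $(r,\text{parity})$ give distinct minimal conformal weights among the $V_{i+1,r}$-factors, ruling out isomorphisms. Everything else is a routine transport of the untwisted picture through the twist functor in the $sl_2$-direction.
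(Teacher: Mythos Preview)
Your approach is correct and shares the decisive ingredient with the paper: both rely on $\sigma$-rationality of $L^{even}_{\hat{g}}(k,0)$ and on \cite{DLM4}, which equates the number of irreducible $\sigma$-twisted modules with the number of $\sigma$-stable untwisted irreducibles. The paper, however, runs the argument in the opposite order and far more economically. Rather than constructing the candidates $\overline{M_r^{even/odd}}$ and then verifying irreducibility, inequivalence, and the count, the paper simply checks that \emph{all} $4k+4$ untwisted irreducibles $M_r^{even}$, $M_r^{odd}$ are $\sigma$-stable---immediate from the decomposition (\ref{decomposition}), since $\sigma$ acts within each $L(k,i)$ and fixes each $V_{i+1,r}$---after recording the conformal-weight formula (\ref{lowest1}) to see that the summands (hence the modules) are pairwise distinguishable. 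Then \cite{DLM4} forces exactly $4k+4$ twisted irreducibles, and the explicit form (\ref{decompositiontwist}) is taken as evident because the twist acts only in the $sl_2$-direction. Your constructive route makes the shape of (\ref{decompositiontwist}) more transparent, but the ``main obstacle'' you anticipate---matching a list of candidates against the DLM4 count---is exactly what the paper's shortcut sidesteps: once every untwisted irreducible is $\sigma$-stable, there is nothing left to match. One small slip in your write-up: the $4k+4$ of Theorem~\ref{thm:affine sl2} counts irreducible $L_{\hat{sl_2}}(k,0)^{\sigma}$-modules, not $\sigma$-twisted $L_{\hat{sl_2}}(k,0)$-modules (of which there are only $k+1$, the $\overline{L(k,i)}$); the agreement of that $4(k+1)$ with your $2(2k+2)$ is a numerical coincidence, not the mechanism.
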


\begin{proof} Since $L^{even}_{\hat{g}}(k,0)$ is $\sigma$-rational. From \cite{DLM4}, we know that the number of inequivalent irreducible $\sigma$-twisted modules of $L^{even}_{\hat{g}}(k,0)$ is precisely the number of $\sigma$-stable irreducible untwisted modules of $L^{even}_{\hat{g}}(k,0)$. Notice that $M_{r}^{even}$ and $M_{r}^{odd}$ for $1\leq r\leq 2k+2$ exhaust all the irreducible modules for $L^{even}_{\hat{g}}(k,0)$. From Theorem \ref{thm:affine sl2} and \cite{CFK}, we know that the conformal weight of $L(k,i)\otimes V_{i+1,r}$ is
\begin{eqnarray}\label{lowest1}
\frac{1}{4}\big(2(i+1)^{2}-2(i+1)r+\frac{(k+2)(r^{2}-1)}{2k+3}\big)
 \end{eqnarray}
We see that these weights are pairwise different for $1\leq r\leq 2k+2$.  From (\ref{decomposition}), we see that $M_{r}^{even}$ and $M_{r}^{odd}$ for $1\leq r\leq 2k+2$ are $\sigma$-stable irreducible modules. Thus there are totally $4k+4$ inequivalent irreducible $\sigma$-twisted modules of $L_{\hat{\g}}^{even}(k,0)$.
\end{proof}
Let $\{ h, e, f\}$
be a standard Chevalley basis of $sl_2$ with brackets $[h,e] = 2e$, $[h,f] = -2f$,
$[e,f] = h$. Set $$h^{'}=e+f, \ e^{'}=\frac{1}{2}(h-e+f),\ f^{'}=\frac{1}{2}(h+e-f).$$ Then $\{h^{'}, e^{'}, f^{'}\}$ is a $sl_2$-triple. Let $h^{''}=\frac{1}{4}h^{'}=\frac{1}{4}(e+f)$, from \cite{L2}, we know that $e^{2\pi \sqrt{-1}h^{''}(0)}$ is an automorphism of $L_{\hat{g}}(k,0)$ and
$$e^{2\pi \sqrt{-1}h^{''}(0)}(h^{'})=h^{'},\ e^{2\pi \sqrt{-1}h^{''}(0)}(e^{'})=-e^{'},\ e^{2\pi \sqrt{-1}h^{''}(0)}(f^{'})=-f^{'},$$
$$e^{2\pi \sqrt{-1}h^{''}(0)}(x+y)=-\sqrt{-1}(x+y),\ e^{2\pi \sqrt{-1}h^{''}(0)}(x-y)=\sqrt{-1}(x-y).$$
Thus $e^{2\pi \sqrt{-1}h^{''}(0)}=\sigma$.

Recall from \cite{JW1} that the top level $U^{i}=\bigoplus_{j=0}^{i}\mathbb{C}v^{i,j}$ of $L(k,i)$ for $0\leq i\leq k$ is an $(i+1)$-dimensional irreducible module for $\C h(0)\oplus \C e(0)\oplus \C f(0)\cong sl_2$. Let
\begin{eqnarray*}
\eta_{i}=\sum_{j=0}^{i}(-1)^{j}v^{i,j},
\end{eqnarray*} then $\eta_{i}$ is the lowest weight vector with weight $-i$ in $(i+1)$-dimensional irreducible module for $\C h^{'}(0)\oplus \C e^{'}(0)\oplus \C f^{'}(0)\cong sl_2$, that is, $f^{'}(0)\eta_{i}=0$ and $h^{'}(0)\eta_{i}=-i\eta_{i}$. Since $\sigma(\eta_{i})=e^{2\pi \sqrt{-1}h^{''}(0)}(\eta_{i})$,
 by a straightforward calculation, we have

\begin{lem}\label{lem:action.} For $0\leq i\leq k$, if $i\in 4\mathbb{Z},\ \sigma(\eta_{i})=\eta_{i}$. If $i\in 4\mathbb{Z}+2,\ \sigma(\eta_{i})=-\eta_{i}$. If $i\in 4\mathbb{Z}+1,\ \sigma(\eta_{i})=-\sqrt{-1}\eta_{i}$. If $i\in 4\mathbb{Z}+3,\ \sigma(\eta_{i})=\sqrt{-1}\eta_{i}$.
\end{lem}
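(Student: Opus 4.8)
The plan is to reduce the computation to the known action of the automorphism $\sigma = e^{2\pi\sqrt{-1}\,h''(0)}$ on the weight-zero subspace of the $sl_2$-triple $\{h',e',f'\}$, and then exploit the fact that $\eta_i$ is the lowest-weight vector of an irreducible $sl_2$-module under $\C h'(0)\oplus\C e'(0)\oplus\C f'(0)$. Since $h'' = \frac14 h'$, I would first observe that $\sigma$ acts on each $h'(0)$-eigenvector by a scalar: if $w$ is an eigenvector with $h'(0)w = \mu w$, then $e^{2\pi\sqrt{-1}\,h''(0)}w = e^{2\pi\sqrt{-1}\,\mu/4}\,w = e^{\pi\sqrt{-1}\,\mu/2}\,w$. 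Applying this with $w = \eta_i$ and $\mu = -i$ (which holds by the stated property $h'(0)\eta_i = -i\eta_i$) gives $\sigma(\eta_i) = e^{-\pi\sqrt{-1}\,i/2}\,\eta_i$.

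Next I would simply evaluate $e^{-\pi\sqrt{-1}\,i/2}$ according to $i \bmod 4$: it equals $1$ if $i \in 4\Z$, equals $e^{-\pi\sqrt{-1}/2} = -\sqrt{-1}$ if $i \in 4\Z+1$, equals $e^{-\pi\sqrt{-1}} = -1$ if $i \in 4\Z+2$, and equals $e^{-3\pi\sqrt{-1}/2} = \sqrt{-1}$ if $i \in 4\Z+3$. This matches the four cases in the statement exactly, so the proof is complete once the eigenvalue formula is justified.

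The one point requiring a little care — and the only genuine obstacle — is justifying that $\sigma(\eta_i) = e^{2\pi\sqrt{-1}\,h''(0)}(\eta_i)$ can be computed as a scalar exponential acting on the single eigenvector $\eta_i$, rather than needing the full exponential series on the top level $U^i$. This is fine because $h''(0) = \frac14 h'(0)$ acts semisimply on $U^i$ (it is part of an $sl_2$-triple acting on a finite-dimensional module), so $U^i$ decomposes into $h'(0)$-eigenspaces and the operator $e^{2\pi\sqrt{-1}\,h''(0)}$ acts on the $\mu$-eigenspace by the scalar $e^{\pi\sqrt{-1}\,\mu/2}$; in particular $\eta_i$, lying in the $(-i)$-eigenspace, is scaled by $e^{-\pi\sqrt{-1}\,i/2}$. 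Here I am using the identity $\sigma = e^{2\pi\sqrt{-1}\,h''(0)}$ established just above in the excerpt, together with the construction of $\eta_i = \sum_{j=0}^i (-1)^j v^{i,j}$ as the lowest-weight vector for the twisted triple. The remaining verification — that $\eta_i$ indeed satisfies $f'(0)\eta_i = 0$ and $h'(0)\eta_i = -i\eta_i$ — is the "straightforward calculation" already asserted in the text, following from the explicit action of $h(0), e(0), f(0)$ on the basis $\{v^{i,j}\}$ of $U^i$ recalled from \cite{JW1}.
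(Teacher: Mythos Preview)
Your proposal is correct and follows essentially the same approach as the paper: the paper states that $\sigma(\eta_i)=e^{2\pi\sqrt{-1}\,h''(0)}(\eta_i)$ and then simply invokes a ``straightforward calculation,'' which is precisely the eigenvalue computation $e^{2\pi\sqrt{-1}\,h''(0)}\eta_i = e^{-\pi\sqrt{-1}\,i/2}\eta_i$ you have spelled out. Your added remark on why the exponential acts as a scalar (semisimplicity of $h'(0)$ on the finite-dimensional top level) just makes explicit what the paper leaves implicit.
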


Now we give the classification of the modules of the vertex operator algebra $L_{\hat{g}}(k,0)^{\sigma}$.

 \begin{thm}\label{thm:affinetwist'} The modules
 \begin{eqnarray}\label{mod1.}
 M_{r}^{even,+}=\bigoplus_{i=0, \ i\in 4\mathbb{Z}}^{k}L(k,i)^{+}\otimes V_{i+1,r}\bigoplus_{i=0, \ i\in 4\mathbb{Z}+2}^{k}L(k,i)^{-}\otimes V_{i+1,r}\end{eqnarray}
  \begin{eqnarray}\label{mod2.}
 M_{r}^{even,-}=\bigoplus_{i=0, \ i\in 4\mathbb{Z}}^{k}L(k,i)^{-}\otimes V_{i+1,r}\bigoplus_{i=0, \ i\in 4\mathbb{Z}+2}^{k}L(k,i)^{+}\otimes V_{i+1,r}\end{eqnarray}
 \begin{eqnarray}\label{mod3.}
 \overline{M_{r}^{even,+}}=\bigoplus_{i=0, \ i\in 4\mathbb{Z}}^{k}\overline{L(k,i)}^{+}\otimes V_{i+1,r}\bigoplus_{i=0, \ i\in 4\mathbb{Z}+2}^{k}\overline{L(k,i)}^{-}\otimes V_{i+1,r}\end{eqnarray}
 \begin{eqnarray}\label{mod4.}
 \overline{M_{r}^{even,-}}=\bigoplus_{i=0, \ i\in 4\mathbb{Z}}^{k}\overline{L(k,i)}^{-}\otimes V_{i+1,r}\bigoplus_{i=0, \ i\in 4\mathbb{Z}+2}^{k}\overline{L(k,i)}^{+}\otimes V_{i+1,r}\end{eqnarray}

\begin{eqnarray}\label{mod5.}M_{r}^{odd,1}=\bigoplus_{i=0, \ i\in 4\mathbb{Z}+1}^{k}L(k,i)^{+}\otimes V_{i+1,r}\bigoplus_{i=0, \ i\in 4\mathbb{Z}+3}^{k}L(k,i)^{-}\otimes V_{i+1,r}
  \end{eqnarray}
\begin{eqnarray}\label{mod6.}M_{r}^{odd,2}=\bigoplus_{i=0, \ i\in 4\mathbb{Z}+1}^{k}L(k,i)^{-}\otimes V_{i+1,r}\bigoplus_{i=0, \ i\in 4\mathbb{Z}+3}^{k}L(k,i)^{+}\otimes V_{i+1,r}
  \end{eqnarray}

\begin{eqnarray}\label{mod7.}\overline{M_{r}^{odd,1}}=\bigoplus_{i=0, \ i\in 4\mathbb{Z}+1}^{k}\overline{L(k,i)}^{+}\otimes V_{i+1,r}\bigoplus_{i=0, \ i\in 4\mathbb{Z}+3}^{k}\overline{L(k,i)}^{-}\otimes V_{i+1,r}
  \end{eqnarray}

 \begin{eqnarray}\label{mod8.}\overline{M_{r}^{odd,2}}=\bigoplus_{i=0, \ i\in 4\mathbb{Z}+1}^{k}\overline{L(k,i)}^{-}\otimes V_{i+1,r}\bigoplus_{i=0, \ i\in 4\mathbb{Z}+3}^{k}\overline{L(k,i)}^{+}\otimes V_{i+1,r}
  \end{eqnarray}

  for $1\leq r\leq 2k+2$ form a complete list of inequivalent simple modules of the vertex operator algebra $L_{\hat{g}}(k,0)^{\sigma}$.

 \end{thm}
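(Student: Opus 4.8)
The plan is to reduce the problem to the $\mathbb{Z}_2$-orbifold of $V:=L^{even}_{\hat g}(k,0)$ and then combine the general orbifold theory with the decompositions already in hand. Since $\sigma$ restricts to an order-$2$ automorphism of the rational, $C_2$-cofinite vertex operator algebra $V$ and $L_{\hat g}(k,0)^{\sigma}=V^{\sigma}$, the orbifold $V^{\sigma}$ is rational and $C_2$-cofinite and $V$ is $\sigma$-rational \cite{M2,CM,CKLR,DH}. By the cyclic orbifold theory \cite{CM,DLM4} (the same route used for $L_{\hat{sl_2}}(k,0)^{\sigma}$ in \cite{JW2}), the irreducible $V^{\sigma}$-modules are precisely the irreducible $V^{\sigma}$-constituents of the irreducible untwisted and $\sigma$-twisted $V$-modules. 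By Theorem \ref{thm:affine'} the former are $M_r^{even},M_r^{odd}$ and by Proposition \ref{prop:twisted1} the latter are $\overline{M_r^{even}},\overline{M_r^{odd}}$, for $1\le r\le 2k+2$. Since $\sigma$ fixes the commutant $L^{Vir}(c_{2k+3,k+2},0)$ pointwise and the modules $L(k,i),\overline{L(k,i)}$ are stable under $\sigma$, all $8k+8$ of these modules are $\sigma$-stable; hence each decomposes over $V^{\sigma}$ into exactly two inequivalent irreducible pieces (the $\pm1$-eigenspaces of an order-$2$ lift of $\sigma$), giving $16k+16$ pairwise inequivalent irreducible $V^{\sigma}$-modules, exactly the number of modules listed in $(\ref{mod1.})$–$(\ref{mod8.})$. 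It thus remains to identify the two constituents of each.

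Next I would analyse $M_r^{even}=\bigoplus_{i\ \mathrm{even}}L(k,i)\otimes V_{i+1,r}$. Fix a lift $\phi$ of $\sigma$ to this module. Because $\sigma$ fixes $L^{Vir}(c_{2k+3,k+2},0)$ pointwise while acting on $L_{\hat{sl_2}}(k,0)\subseteq V$ through $\sigma_{sl_2}\colon h\mapsto-h,\ e\leftrightarrow f$, Schur's lemma applied to the $L^{Vir}$-factor forces $\phi$ to act on the $i$-th summand as $\phi_i\otimes\mathrm{id}$, with $\phi_i$ a lift of $\sigma_{sl_2}$ to $L(k,i)$. Using the identity $\sigma=e^{2\pi\sqrt{-1}h''(0)}$ one may take $\phi=e^{2\pi\sqrt{-1}h''(0)}|_{M_r^{even}}$; since every summand has $i$ even and $h'(0)=4h''(0)$ has integer eigenvalues congruent to $i$ modulo $2$, one gets $\phi^2=e^{\pi\sqrt{-1}h'(0)}=1$, so $\phi$ is a genuine order-$2$ lift and $\phi_i=e^{2\pi\sqrt{-1}h''(0)}|_{L(k,i)}$. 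By Lemma \ref{lem:action.}, $\phi_i\eta_i=\eta_i$ for $i\in4\mathbb{Z}$ and $\phi_i\eta_i=-\eta_i$ for $i\in4\mathbb{Z}+2$, so comparing with the definition of $L(k,i)^{\pm}$ in \cite{JW2} the $+1$-eigenspace of $\phi_i$ is $L(k,i)^{+}$ when $i\in4\mathbb{Z}$ and $L(k,i)^{-}$ when $i\in4\mathbb{Z}+2$. Hence the two irreducible $V^{\sigma}$-submodules of $M_r^{even}$ are exactly $M_r^{even,+}$ and $M_r^{even,-}$ of $(\ref{mod1.})$–$(\ref{mod2.})$.

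The remaining cases are handled the same way. For $M_r^{odd}=\bigoplus_{i\ \mathrm{odd}}L(k,i)\otimes V_{i+1,r}$ the operator $e^{2\pi\sqrt{-1}h''(0)}$ squares to $-1$ on each summand (now $h'(0)$ has odd eigenvalues), so the order-$2$ lifts of $\sigma$ are $\phi=\pm\sqrt{-1}\,e^{2\pi\sqrt{-1}h''(0)}|_{M_r^{odd}}$, the two choices interchanging the eigenspaces; by Lemma \ref{lem:action.} ($e^{2\pi\sqrt{-1}h''(0)}\eta_i=-\sqrt{-1}\eta_i$ for $i\in4\mathbb{Z}+1$ and $=\sqrt{-1}\eta_i$ for $i\in4\mathbb{Z}+3$) and the \cite{JW2} labeling, the $+1$-eigenspace of $\sqrt{-1}\,e^{2\pi\sqrt{-1}h''(0)}$ is $L(k,i)^{+}$ for $i\in4\mathbb{Z}+1$ and $L(k,i)^{-}$ for $i\in4\mathbb{Z}+3$, which gives $M_r^{odd,1}$ and $M_r^{odd,2}$ of $(\ref{mod5.})$–$(\ref{mod6.})$. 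For the $\sigma$-twisted modules $\overline{M_r^{even}},\overline{M_r^{odd}}$ one repeats the argument with the $\sigma_{sl_2}$-twisted $L_{\hat{sl_2}}(k,0)$-modules $\overline{L(k,i)}$ and their $L_{\hat{sl_2}}(k,0)^{\sigma}$-decompositions $\overline{L(k,i)}^{\pm}$ from \cite{JW2}: a lift of $\sigma$ to a $\sigma$-stable $\sigma$-twisted module again restricts summandwise as $\phi_i\otimes\mathrm{id}$, and an $e^{2\pi\sqrt{-1}h''(0)}$-type operator (rescaled by $\pm\sqrt{-1}$ in the odd case) acting on the relevant top-level vectors produces the modules of $(\ref{mod3.})$–$(\ref{mod4.})$ and $(\ref{mod7.})$–$(\ref{mod8.})$. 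Together with the first paragraph this exhausts all $16k+16$ irreducible $V^{\sigma}$-modules.

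The step I expect to be the main obstacle is the bookkeeping of normalizations: the lift $\phi$ of $\sigma$ on a $\sigma$-stable module is canonical only up to a scalar, so on each graded summand one has to pin down this scalar precisely enough that the eigenspace decomposition matches the specific labeling of $L(k,i)^{\pm}$ and $\overline{L(k,i)}^{\pm}$ fixed in \cite{JW2}; this is exactly what Lemma \ref{lem:action.} (and, for the weight computations, $(\ref{lowest1})$) is for, and it is what makes the $4\mathbb{Z}$ versus $4\mathbb{Z}+2$ (resp. $4\mathbb{Z}+1$ versus $4\mathbb{Z}+3$) dichotomy appear. A secondary point requiring care is the general $\mathbb{Z}_2$-orbifold input — that each $\sigma$-stable untwisted or $\sigma$-twisted module splits into two inequivalent irreducible $V^{\sigma}$-modules, that all $16k+16$ of them are pairwise inequivalent, and that no untwisted constituent coincides with a $\sigma$-twisted one — but this is standard once $V^{\sigma}$ is known to be rational and $C_2$-cofinite.
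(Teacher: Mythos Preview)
Your proposal is correct and follows essentially the same route as the paper: both reduce to the $\mathbb{Z}_2$-orbifold of $L^{even}_{\hat g}(k,0)$, invoke quantum Galois/orbifold theory (the paper cites \cite{DM1}, you cite \cite{CM,DLM4}) to get the eigenspace decompositions of the $\sigma$-stable (twisted and untwisted) modules, and then use Lemma~\ref{lem:action.} together with the \cite{JW2} description of $L(k,i)^{\pm}$ via the vectors $\eta_i$ to match the eigenspaces with the stated summands. Your write-up is simply more detailed than the paper's terse proof, and your identification of the normalization of the lift $\phi$ as the key bookkeeping point is exactly what the paper handles implicitly via the formula $\sigma=e^{2\pi\sqrt{-1}h''(0)}$ and Lemma~\ref{lem:action.}.
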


\begin{proof} From \cite{JW2}, we know that
for $i\neq 0$, $L(k,i)^{+}$ is an $L_{\hat{sl_2}}(k,0)^{\sigma}$-module generated by $\eta_{i}$, i.e., $L(k,i)^{+}=L_{\hat{sl_2}}(k,0)^{\sigma}.\eta_{i}$,
$L(k,i)^{-}$ is an $L_{\hat{sl_2}}(k,0)^{\sigma}$-module generated by $e^{'}(0)\eta_{i}$, i.e., $L(k,i)^{-}=L_{\hat{sl_2}}(k,0)^{\sigma}.(e^{'}(0)\eta_{i})$.
And $L(k,0)^{+}=L_{\hat{sl_2}}(k,0)^{\sigma}.\mathbf{1}$, $L(k,0)^{-}=L_{\hat{sl_2}}(k,0)^{\sigma}.(e^{'}(-1)\mathbf{1})$. $\overline{L(k,i)}^{+}$ is an $L_{\hat{sl_2}}(k,0)^{\sigma}$-module generated (in twisted module $\overline{L(k,i)}$) by $\eta_{i}$, we still denoted it by $\overline{L(k,i)}^{+}=L_{\hat{sl_2}}(k,0)^{\sigma}.\eta_{i}$, and $\overline{L(k,i)}^{-}=L_{\hat{sl_2}}(k,0)^{\sigma}.((e-f)_{-\frac{1}{2}}\eta_{i})$. Then by \cite{DM1} and
Lemma \ref{lem:action.}, we obtain (\ref{mod1.})-(\ref{mod8.}). Where we notice that $$M_{r}^{even,\pm}=\{v\in M_{r}^{even}|\sigma(v)=\pm v\},\ \ \overline{M_{r}^{even,\pm}}=\{v\in \overline{M_{r}^{even}}|\sigma(v)=\pm v\},$$ $$M_{r}^{odd,1}=\{v\in M_{r}^{odd}|\sigma(v)=-\sqrt{-1}v\}, \ M_{r}^{odd,2}=\{v\in M_{r}^{odd}|\sigma(v)=\sqrt{-1}v\},$$
$$\overline{M_{r}^{odd,1}}=\{v\in \overline{M_{r}^{odd}}|\sigma(v)=-\sqrt{-1}v\}, \ \overline{M_{r}^{odd,2}}=\{v\in \overline{M_{r}^{odd}}|\sigma(v)=\sqrt{-1}v\}.$$

\end{proof}

 \begin{rmk}\label{rmk:affine orbifold} With the notations in Theorem \ref{thm:affinetwist'}, we call $M_{r}^{even,\pm}, M_{r}^{odd,1}, M_{r}^{odd,2}$ untwisted type modules and  $\overline{M_{r}^{even,\pm}}, \overline{M_{r}^{odd,1}}, \overline{M_{r}^{odd,2}}$ twisted type modules respectively.

\end{rmk}

 \begin{thm}\label{thm:contragredient aff}
 For $1\leq r\leq 2k+2$.

  (1) $M_{r}^{even,\pm}$ are self-dual, and $(M_{r}^{odd,1})^{'}\cong M_{r}^{odd,2},\ (M_{r}^{odd,2})^{'}\cong M_{r}^{odd,1}.$

(2) If $k\in 4\mathbb{Z}$, $(\overline{M_{r}^{even,\pm}})^{'}=\overline{M_{2k+3-r}^{even,\pm}}.$
If $k\in 4\mathbb{Z}+2$, $(\overline{M_{r}^{even,+}})^{'}=\overline{M_{2k+3-r}^{even,-}}.$\\

\ \ \ \  If $k\in 4\mathbb{Z}+1$, $(\overline{M_{r}^{even,+}})^{'}=\overline{M_{2k+3-r}^{odd,1}}.$
 If $k\in 4\mathbb{Z}+3$, $(\overline{M_{r}^{even,+}})^{'}=\overline{M_{2k+3-r}^{odd,2}}.$\\

\ \ \ \  If $k\in 4\mathbb{Z}$, $(\overline{M_{r}^{odd,1}})^{'}=\overline{M_{2k+3-r}^{odd,2}}.$\\

 \ \ \ \  If $k\in 4\mathbb{Z}+2$, $(\overline{M_{r}^{odd,1}})^{'}=\overline{M_{2k+3-r}^{odd,1}},$ $(\overline{M_{r}^{odd,2}})^{'}=\overline{M_{2k+3-r}^{odd,2}}.$\\

\ \ \ \   If $k\in 4\mathbb{Z}+1$, $(\overline{M_{r}^{odd,2}})^{'}=\overline{M_{2k+3-r}^{even,-}}.$
 If $k\in 4\mathbb{Z}+3$, $(\overline{M_{r}^{odd,1}})^{'}=\overline{M_{2k+3-r}^{even,-}}.$
\end{thm}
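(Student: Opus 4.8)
The plan is to reduce everything to computations of conformal weights, together with the known contragredient behavior of the factors $L^{Vir}(c_{2k+3,k+2},h_{2k+3,k+2}^{r,s})$ and of the $L_{\hat{sl_2}}(k,0)^{\sigma}$-modules from \cite{JW2}. The starting point is the duality formula \eqref{eq:3.0}: passing to contragredients on $L_{\hat g}(k,0)^{\sigma}$ turns a $\sigma$-twisted module into a $\sigma^{-1}=\sigma^3$-twisted module, but since $\sigma$ has order $4$ on $L_{\hat g}^{even}(k,0)$ while giving order $2$ on the fixed-point algebra, a $\sigma^3$-twisted module of $L_{\hat g}^{even}(k,0)$ is again $\sigma$-twisted; hence the contragredient of a twisted-type module in Theorem \ref{thm:affinetwist'} is again one of the twisted-type modules in that list, and likewise contragredients of untwisted-type modules are untwisted-type. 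So the whole statement is just a matter of \emph{identifying which one}.

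For part (1), the untwisted case: I would use that $(L(k,i)\otimes V_{i+1,r})' \cong L(k,i)\otimes V_{i+1,r}$ as $L_{\hat{sl_2}}(k,0)\otimes L^{Vir}$-modules, since $L(k,i)$ is self-dual for $L_{\hat{sl_2}}(k,0)$ (it is $\sigma$-fixed with integrable weight) and the Virasoro minimal model modules $V_{r,s}$ are self-dual. Therefore $(M_r^{even})'\cong M_r^{even}$ and $(M_r^{odd})'\cong M_r^{odd}$ as $L_{\hat g}^{even}(k,0)$-modules, and one only needs to track the $\sigma$-eigenvalue decomposition. Here I would invoke the standard fact that for a $\mathbb Z_2$ (or $\mathbb Z_4$) orbifold, contragredient conjugates the character $\chi$ of the group action to $\bar\chi=\chi^{-1}$ on eigenspaces; on the $\pm1$ eigenspaces this is trivial (so $M_r^{even,\pm}$, and the $\mathbb Z_2$-even $M_r^{even}$ pieces, are self-dual), while it swaps the $+\sqrt{-1}$ and $-\sqrt{-1}$ eigenspaces, giving $(M_r^{odd,1})'\cong M_r^{odd,2}$ and vice versa. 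One should double-check consistency by computing the top-level conformal weight from \eqref{lowest1} and verifying it is fixed under the involution $v\mapsto v'$, which it is since the formula does not depend on $r$ through the pairing with $i$ in a way that breaks under $r\mapsto r$.

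For part (2), the twisted case, the key input is the $r\mapsto 2k+3-r$ symmetry: for the Virasoro minimal model $L^{Vir}(c_{2k+3,k+2},0)$ the contragredient of the $\sigma$-twisted sector is governed by the reflection $r\mapsto p-r = (2k+3)-r$ on the label $s$ (equivalently the spectral-flow / field identification in the minimal model), and similarly $\overline{L(k,i)}' \cong \overline{L(k,i)}$ for $L_{\hat{sl_2}}(k,0)$ up to the known $\pm$-eigenspace bookkeeping in \cite{JW2}. So $(\overline{M_r^{even}})'\cong \overline{M_{2k+3-r}^{even}}$ and $(\overline{M_r^{odd}})'\cong \overline{M_{2k+3-r}^{odd}}$ at the level of $L_{\hat g}^{even}(k,0)$-modules; then one refines to the $L_{\hat g}(k,0)^{\sigma}$-modules by computing how $\sigma$ acts on the lowest-weight vectors $\eta_i$ (via Lemma \ref{lem:action.}) after the shift $i$-parity vs. $k$-parity interacts with the twisted grading. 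Concretely: in $\overline{M_r^{even}}$ the running index $i$ is even, and $2k+3-r$ changes the Virasoro label parity, which (by the decomposition formulas \eqref{mod3.}--\eqref{mod8.}) can move an ``even'' family to an ``odd'' family precisely when $k$ is odd — this is the source of the case split on $k \bmod 4$. I would verify each of the eight sub-statements by comparing lowest weights: compute the conformal weight of the top level of $\overline{L(k,i)}\otimes V_{i+1,r}$ (the twisted $\hat{sl_2}$ part shifts the weight by the standard $\Delta$-shift coming from $e^{2\pi\sqrt{-1}h''(0)}$), and match it against the formula for the candidate module with label $2k+3-r$; uniqueness of irreducibles with a given lowest weight (established in the proof of Proposition \ref{prop:twisted1}) then pins down the isomorphism.

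\textbf{Main obstacle.} The routine part is the self-duality of the $\hat{sl_2}$ and Virasoro factors; the delicate part is the \emph{eigenvalue bookkeeping} — tracking simultaneously (i) the $\sigma$-eigenvalue of $\eta_i$ from Lemma \ref{lem:action.}, which depends on $i \bmod 4$; (ii) the effect of the reflection $r\mapsto 2k+3-r$ on the Virasoro label, which shifts $s=i+1$ by an amount depending on $k\bmod 2$; and (iii) how the twisted grading on $\overline{L(k,i)}$ contributes an extra fourth-root-of-unity factor to the action of $\sigma$ on the twisted top level. Getting the interaction of these three $\mathbb Z_4$-valued quantities exactly right, with the correct signs and factors of $\sqrt{-1}$, is what produces the four-way case distinction on $k\bmod 4$, and is where I expect essentially all the real work (and all the potential for sign errors) to lie. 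I would organize this as a single table of (i)$\times$(ii)$\times$(iii) and read off the eight cases, using the conformal-weight matching as an independent check.
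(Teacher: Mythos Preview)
Your overall framework --- decompose each $\overline{M_r^{even,+}}$, etc., into its $L_{\hat{sl_2}}(k,0)^\sigma\otimes L^{Vir}$ summands, take contragredients of the factors, and then reassemble --- is exactly what the paper does. But in part~(2) you have the mechanism backwards, and this is a genuine gap.

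You write that ``$\overline{L(k,i)}' \cong \overline{L(k,i)}$ for $L_{\hat{sl_2}}(k,0)$'' and that the shift $r\mapsto 2k+3-r$ comes from a reflection in the Virasoro label under contragredient. Neither is correct. The Virasoro minimal-model modules $V_{i+1,r}$ are self-dual, so no label moves under contragredient there. The shift actually originates on the $\hat{sl_2}$ side: Theorem~3.25 of \cite{JW2} gives
\[
\bigl(\overline{L(k,i)}^{\pm}\bigr)' \;\cong\; \overline{L(k,k-i)}^{\pm},
\]
so the index changes from $i$ to $k-i$. Only \emph{after} this do you invoke the Virasoro field identification $V_{i+1,r}\cong V_{k+1-i,\,2k+3-r}$ to rewrite the Virasoro factor with first label $k-i+1$; that is where $2k+3-r$ appears. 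Concretely, for $k\in 4\mathbb Z+1$ the paper computes
\[
\bigl(\overline{M_r^{even,+}}\bigr)'
=\!\!\bigoplus_{i\in4\mathbb Z}\!\!\overline{L(k,k-i)}^{+}\otimes V_{k+1-i,\,2k+3-r}
\;\oplus\!\!\bigoplus_{i\in4\mathbb Z+2}\!\!\overline{L(k,k-i)}^{-}\otimes V_{k+1-i,\,2k+3-r},
\]
and then simply observes that when $k\in4\mathbb Z+1$ and $i\in4\mathbb Z$ one has $k-i\in4\mathbb Z+1$, while $i\in4\mathbb Z+2$ gives $k-i\in4\mathbb Z+3$; comparing with the defining decomposition \eqref{mod7.} yields $\overline{M_{2k+3-r}^{odd,1}}$ on the nose. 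The whole $k\bmod 4$ case split is just this residue check on $k-i$, with no extra fourth-root-of-unity phases from the twisted grading and no conformal-weight computation needed. Your proposed table of three interacting $\mathbb Z_4$-valued quantities is over-engineering a one-line re-indexing; once you replace your incorrect $\overline{L(k,i)}'\cong\overline{L(k,i)}$ with the correct $i\mapsto k-i$ from \cite{JW2}, parts (ii) and (iii) of your bookkeeping disappear. (A minor side remark: $\sigma$ has order~$2$, not~$4$, on $L_{\hat g}^{even}(k,0)$; order~$4$ is only on the full superalgebra.)
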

\begin{proof} We prove the case that if $k\in 4\mathbb{Z}+1$, $(\overline{M_{r}^{even,+}})^{'}=\overline{M_{2k+3-r}^{odd,1}}.$ Other cases are similar to prove. Since the modules $V_{i,r}$ for $0\leq i\leq k,\ 1\leq r\leq 2k+2$ are self-dual, if $k\in 4\mathbb{Z}+1$, we have

\begin{eqnarray*}
\begin{split}(\overline{M_{r}^{even,+}})^{'} &= (\bigoplus\limits_{\tiny{\begin{split}i=0 \ \\ i\in 4\mathbb{Z}\end{split}}}^{k}(\overline{L(k,i)}^{+}\otimes V_{i+1,r})^{'}\bigoplus\limits_{\tiny{\begin{split}i=0 \ \ \ \ \\ i\in 4\mathbb{Z}+2\end{split}}}^{k}(\overline{L(k,i)}^{-}\otimes V_{i+1,r}))^{'}\\
&=\bigoplus\limits_{\tiny{\begin{split}i=0 \ \\  i\in 4\mathbb{Z}\end{split}}}^{k}(\overline{L(k,i)}^{+})^{'}\otimes V_{i+1,r}\bigoplus\limits_{\tiny{\begin{split} i=0 \ \ \ \\   i\in 4\mathbb{Z}+2\end{split}}}^{k}(\overline{L(k,i)}^{-})^{'}\otimes V_{i+1,r}\\
&=\bigoplus\limits_{\tiny{\begin{split}i=0 \ \\  i\in 4\mathbb{Z}\end{split}}}^{k}\overline{L(k,k-i)}^{+}\otimes V_{i+1,r}\bigoplus\limits_{\tiny{\begin{split} i=0 \ \ \ \\   i\in 4\mathbb{Z}+2\end{split}}}^{k}\overline{L(k,k-i)}^{-}\otimes V_{i+1,r}\\
&=\bigoplus\limits_{\tiny{\begin{split}i=0 \ \\  i\in 4\mathbb{Z}\end{split}}}^{k}\overline{L(k,k-i)}^{+}\otimes V_{k+1-i,2k+3-r}\bigoplus\limits_{\tiny{\begin{split} i=0 \ \ \ \\   i\in 4\mathbb{Z}+2\end{split}}}^{k}\overline{L(k,k-i)}^{-}\otimes V_{k+1-i,2k+3-r},
\end{split}
\end{eqnarray*}
where in the last equality we used the fact that $V_{i,r}\cong V_{k+2-i,2k+3-r}$ and in the third equality we used the second result of Theorem 3.25 in \cite{JW2}. Moreover, we notice that if $k\in 4\mathbb{Z}+1$ and $i\in 4\mathbb{Z}$, we have $k-i\in 4\mathbb{Z}+1$. If $k\in 4\mathbb{Z}+1$ and $i\in 4\mathbb{Z}+2$, we have $k-i\in 4\mathbb{Z}+3$. Thus we obtain $(\overline{M_{r}^{even,+}})^{'}=\overline{M_{2k+3-r}^{odd,1}}.$

\end{proof}

\section{Fusion rules for the orbifold of affine vertex operator superalgebra $L_{\hat{g}}(k,0)$
}\label{Sect: fusion product}\def\theequation{5.\arabic{equation}}
\setcounter{equation}{0}

In this section, we determine the fusion rules for $L_{\hat{g}}(k,0)^{\sigma}$. For simplicity, we denote untwisted type modules $M_{r}^{odd,1}$ by $M_{r}^{odd,+}$ and $M_{r}^{odd,2}$ by $M_{r}^{odd,-}$. We denote twisted type modules $\overline{M_{r}^{odd,1}}$ by $\overline{M_{r}^{odd,+}}$ and $\overline{M_{r}^{odd,2}}$ by $\overline{M_{r}^{odd,-}}$. And for irreducible $L_{\hat{g}}(k,0)^{\sigma}$-modules $W^{1}$ and $W^{2}$, we use $W^{1}\boxtimes W^{2}$ to denote the fusion product $W^{1}\boxtimes_{L_{\hat{g}}(k,0)^{\sigma}}W^{2}$.\\

Now we  recall from \cite{FHL} the notions of intertwining operators and fusion rules.

\begin{definition} Let $\left(V,Y,1,\omega\right)$ be a vertex operator algebra and
let $(W^{1},\ Y^{1}),\ (W^{2},\ Y^{2})$ and $(W^{3},\ Y^{3})$ be
$V$-modules. An \emph{intertwining operator} of type $\left(\begin{array}{c}
W^{3}\\
W^{1\ }W^{2}
\end{array}\right)$ is a linear map
\[
I(\cdot,\ z):\ W^{1}\to\text{\ensuremath{\mbox{Hom}(W^{2},\ W^{3})\{z\}}}
\]

\[
u\to I(u,\ z)=\sum_{n\in\mathbb{Q}}u_{n}z^{-n-1}
\]
 satisfying:

(1) for any $u\in W^{1}$ and $v\in W^{2}$, $u_{n}v=0$ for $n$
sufficiently large;

(2) $I(L(-1)v,\ z)=\frac{d}{dz}I(v,\ z)$;

(3) (Jacobi identity) for any $u\in V,\ v\in W^{1}$

\[
z_{0}^{-1}\delta\left(\frac{z_{1}-z_{2}}{z_{0}}\right)Y^{3}(u,\ z_{1})I(v,\ z_{2})-z_{0}^{-1}\delta\left(\frac{-z_{2}+z_{1}}{z_{0}}\right)I(v,\ z_{2})Y^{2}(u,\ z_{1})
\]
\[
=z_{2}^{-1}\left(\frac{z_{1}-z_{0}}{z_{2}}\right)I(Y^{1}(u,\ z_{0})v,\ z_{2}).
\]

The space of all intertwining operators of type $\left(\begin{array}{c}
W^{3}\\
W^{1}\ W^{2}
\end{array}\right)$ is denoted by
$$I_{V}\left(\begin{array}{c}
W^{3}\\
W^{1}\ W^{2}
\end{array}\right).$$ Let $N_{W^{1},\ W^{2}}^{W^{3}}=\dim I_{V}\left(\begin{array}{c}
W^{3}\\
W^{1}\ W^{2}
\end{array}\right)$. These integers $N_{W^{1},\ W^{2}}^{W^{3}}$ are usually called the
\emph{fusion rules}. \end{definition}




\begin{definition} Let $V$ be a vertex operator algebra, and $W^{1},$
$W^{2}$ be two $V$-modules. A module $(W,I)$, where $I\in I_{V}\left(\begin{array}{c}
\ \ W\ \\
W^{1}\ \ W^{2}
\end{array}\right),$ is called a \emph{tensor product} (or fusion product) of $W^{1}$
and $W^{2}$ if for any $V$-module $M$ and $\mathcal{Y}\in I_{V}\left(\begin{array}{c}
\ \ M\ \\
W^{1}\ \ W^{2}
\end{array}\right),$ there is a unique $V$-module homomorphism $f:W\rightarrow M,$ such
that $\mathcal{Y}=f\circ I.$ As usual, we denote $(W,I)$ by $W^{1}\boxtimes_{V}W^{2}.$
\end{definition}

\begin{rmk}
It is well known that if $V$ is rational, then for any two irreducible
$V$-modules $W^{1}$ and $W^{2},$ the fusion product $W^{1}\boxtimes_{V}W^{2}$ exists and
$$
W^{1}\boxtimes_{V}W^{2}=\sum_{W}N_{W^{1},\ W^{2}}^{W}W,
$$
 where $W$ runs over the set of equivalence classes of irreducible
$V$-modules.
\end{rmk}
Fusion rules have the following symmetric property \cite{FHL}.

\begin{prop}\label{fusionsymm.}
Let $W^{i} (i=1,2,3)$ be $V$-modules. Then
$$N_{W^{1},W^{2}}^{W^{3}}=N_{W^{2},W^{1}}^{W^{3}}, \ N_{W^{1},W^{2}}^{W^{3}}=N_{W^{1},(W^{3})^{'}}^{(W^{2})^{'}}.$$
\end{prop}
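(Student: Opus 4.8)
The plan is to prove each of the two identities by exhibiting an explicit linear isomorphism between the two spaces of intertwining operators involved, which immediately forces the equality of their dimensions. This is the classical argument of \cite{FHL}, and the formal-variable computations needed are exactly those of that reference; in the paper I would either reproduce them or invoke \cite{FHL} directly.

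For the first identity $N_{W^{1},W^{2}}^{W^{3}}=N_{W^{2},W^{1}}^{W^{3}}$, I would define, for each $I\in I_{V}\left(\begin{array}{c}W^{3}\\ W^{1}\ W^{2}\end{array}\right)$, a new map $\Omega(I)\colon W^{2}\to\mathrm{Hom}(W^{1},W^{3})\{z\}$ by
$$\Omega(I)(w^{2},z)w^{1}=e^{zL(-1)}I(w^{1},e^{\pi\sqrt{-1}}z)w^{2},\qquad w^{1}\in W^{1},\ w^{2}\in W^{2},$$
and check that $\Omega(I)$ is an intertwining operator of type $\left(\begin{array}{c}W^{3}\\ W^{2}\ W^{1}\end{array}\right)$. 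The truncation condition and the $L(-1)$-derivative property follow readily from the corresponding properties of $I$ together with $[L(-1),e^{zL(-1)}]=0$, while the Jacobi identity is obtained by conjugating the Jacobi identity of $I$ by $e^{zL(-1)}$ and rearranging the formal delta functions. Since applying $\Omega$ twice recovers the original operator up to the standard sign/branch bookkeeping, $\Omega$ is a bijection, which yields the first identity.

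For the second identity $N_{W^{1},W^{2}}^{W^{3}}=N_{W^{1},(W^{3})'}^{(W^{2})'}$, I would imitate the definition \eqref{contragredient} of the contragredient module: for $I\in I_{V}\left(\begin{array}{c}W^{3}\\ W^{1}\ W^{2}\end{array}\right)$ define $A(I)\colon W^{1}\to\mathrm{Hom}((W^{3})',(W^{2})')\{z\}$ by
$$\langle A(I)(w^{1},z)f,w^{2}\rangle=\langle f,I\big(e^{zL(1)}(-z^{-2})^{L(0)}w^{1},z^{-1}\big)w^{2}\rangle$$
for $f\in(W^{3})'$ and $w^{2}\in W^{2}$, and verify that $A(I)$ is an intertwining operator of type $\left(\begin{array}{c}(W^{2})'\\ W^{1}\ (W^{3})'\end{array}\right)$. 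The Jacobi identity for $A(I)$ is deduced from that of $I$ together with the defining relation \eqref{contragredient} for $Y_{(W^{3})'}$ and $Y_{(W^{2})'}$ and the compatibility of $e^{zL(1)}$ and $(-z^{-2})^{L(0)}$ with the vertex operator $Y$ on $V$. Using the canonical identifications $(W^{i})''\cong W^{i}$, one checks that $A$ admits an inverse of the same shape, so it is an isomorphism onto $I_{V}\left(\begin{array}{c}(W^{2})'\\ W^{1}\ (W^{3})'\end{array}\right)$, giving the second identity.

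The main obstacle in both steps is the verification of the Jacobi identity for the newly constructed operators. This requires the formal-calculus identities for $\delta\left(\frac{z_{1}-z_{2}}{z_{0}}\right)$ and its variants, together with the conjugation formulas describing how $e^{zL(\pm1)}$ and $z^{L(0)}$ act through $Y$; for $A(I)$ one additionally uses precisely the fact that these operators are what make the contragredient module structure of \eqref{contragredient} well defined. Since all of these computations are carried out in full in \cite{FHL}, the cleanest route in the paper is to cite that reference for the proof.
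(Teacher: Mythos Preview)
Your proposal is correct and matches the paper's treatment: the paper does not supply its own proof but simply attributes the result to \cite{FHL}, and your sketch of the $\Omega$ and $A$ constructions is precisely the classical argument given there. Your closing remark that the cleanest route is to cite \cite{FHL} is exactly what the paper does.
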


 We recall from \cite{JW2} the fusion rules for $L_{\hat{sl_2}}(k,0)^{\sigma}$ for later use.
For $0\leq i\leq k, \ 0\leq j\leq k,\ 0\leq l\leq k$ such that $i+j+l\in 2\mathbb{Z}$, we define

\[\mbox{sign}(i,j,l)^{+}=\begin{cases}+,\ & \mbox{if} \ i+j-l\in 4{\mathbb{Z}},\cr
-,\ &\mbox{if} \  i+j-l\notin 4{\mathbb{Z}},\end{cases}\]

and

\[\mbox{sign}(i,j,l)^{-}=\begin{cases}-,\ & \mbox{if} \ i+j-l\in 4{\mathbb{Z}},\cr
+,\ &\mbox{if} \  i+j-l\notin 4{\mathbb{Z}}.\end{cases}\]

\begin{thm}\cite{JW2}\label{fusion.aff.} The fusion rules for the ${\mathbb{Z}}_{2}$-orbifold affine vertex operator algebra $L(k,0)^{\sigma}$ are as follows:
\begin{eqnarray}\label{fusion.untwist1..}
L(k,i)^{+}\boxtimes L(k,j)^{\pm}=\sum\limits_{\tiny{\begin{split}|i-j|\leq l\leq i+j \\  i+j+l\in 2\mathbb{Z} \ \ \ \\ i+j+l\leq 2k\ \ \ \end{split}}} L(k,l)^{\mbox{sign}(i,j,l)^{\pm}},
\end{eqnarray}

\begin{eqnarray}\label{fusion.untwist2}
L(k,i)^{-}\boxtimes L(k,j)^{\pm}=\sum\limits_{\tiny{\begin{split}|i-j|\leq l\leq i+j \\  i+j+l\in 2\mathbb{Z} \ \ \ \\ i+j+l\leq 2k\ \ \ \end{split}}}  L(k,l)^{\mbox{sign}(i,j,l)^{\mp}},
\end{eqnarray}

\begin{eqnarray}\label{fusion.twist1}
L(k,i)^{+}\boxtimes \overline{L(k,j)}^{\pm}=\sum\limits_{\tiny{\begin{split}|i-j|\leq l\leq i+j \\  i+j+l\in 2\mathbb{Z} \ \ \ \\ i+j+l\leq 2k\ \ \ \end{split}}} \overline{L(k,l)}^{\mbox{sign}(i,j,l)^{\pm}},
\end{eqnarray}

\begin{eqnarray}\label{fusion.twist2}
L(k,i)^{-}\boxtimes \overline{L(k,j)}^{\pm}=\sum\limits_{\tiny{\begin{split}|i-j|\leq l\leq i+j \\  i+j+l\in 2\mathbb{Z} \ \ \ \\ i+j+l\leq 2k\ \ \ \end{split}}}  \overline{L(k,l)}^{\mbox{sign}(i,j,l)^{\mp}}.
\end{eqnarray}
\end{thm}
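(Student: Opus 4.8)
The plan is to reduce the statement to the (multiplicity free) fusion rules of $V:=L_{\hat{sl_2}}(k,0)$ --- namely $L(k,i)\boxtimes L(k,j)=\sum_{l}L(k,l)$, the sum over $|i-j|\le l\le i+j$ with $i+j+l\in 2\mathbb{Z}$ and $i+j+l\le 2k$ --- together with one sign computation for each such admissible triple $(i,j,l)$. Since $V$ and $V^{\sigma}$ are rational and $C_2$-cofinite, all the fusion products in the statement exist; by Theorem \ref{thm:affine sl2} their irreducible constituents lie among the $L(k,l)^{\pm}$ when no twisted module is involved, and among the $\overline{L(k,l)}^{\pm}$ otherwise. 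The key input is the standard $\mathbb{Z}_2$-orbifold correspondence for intertwining operators (\cite{DM1}, \cite{FHL}, and its $\sigma$-twisted analogue, as used in \cite{JW2}): because each $L(k,i)$ is $\sigma$-stable, restriction identifies $\dim I_{V^{\sigma}}\!\binom{L(k,l)^{\epsilon_3}}{L(k,i)^{\epsilon_1}\,L(k,j)^{\epsilon_2}}$ with $\dim I_{V}\!\binom{L(k,l)}{L(k,i)\,L(k,j)}$ when $\epsilon_3$ is the sign by which $\sigma$ acts on $I_{V}\!\binom{L(k,l)}{L(k,i)\,L(k,j)}$ (relative to the chosen lifts of $\sigma$), and with $0$ otherwise; similarly, $\sigma$-twisted intertwining operators of type $\binom{\overline{L(k,l)}}{L(k,i)\;\overline{L(k,j)}}$ govern the fusion of an untwisted-type module with a twisted-type one. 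As each of these $I_{V}$-spaces is one-dimensional for an admissible triple, everything is pinned down once the relevant sign is known.

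To compute that sign in the untwisted case I use that the leading coefficient of an intertwining operator in $I_{V}\!\binom{L(k,l)}{L(k,i)\,L(k,j)}$ is a nonzero $(\C h(0)\oplus\C e(0)\oplus\C f(0))$-module homomorphism $F\colon U^{i}\otimes U^{j}\to U^{l}$, i.e.\ (up to scalar) the projection onto the unique copy of $U^{l}$ in $U^{i}\otimes U^{j}$. Let $\tilde\rho_{m}$ denote the order-two lift of $\sigma$ to $L(k,m)$ normalized by $\tilde\rho_{m}(\eta_{m})=\eta_{m}$, so that $L(k,m)^{\pm}$ is its $(\pm 1)$-eigenspace, as in \cite{JW2}; since $\sigma=e^{2\pi\sqrt{-1}h''(0)}$ acts on the $h'(0)$-weight-$\lambda$ subspace of $L(k,m)$ by $(\sqrt{-1})^{\lambda}$, we get $\tilde\rho_{m}=(\sqrt{-1})^{m}\sigma$, which acts there by $(-1)^{(\lambda+m)/2}$ (note $\lambda\equiv m\bmod 2$); this is compatible with Lemma \ref{lem:action.}. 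Now $\tilde\rho_{l}\circ F\circ(\tilde\rho_{i}^{-1}\otimes\tilde\rho_{j}^{-1})$ is again an $sl_2$-homomorphism $U^{i}\otimes U^{j}\to U^{l}$, hence equals $\chi(i,j,l)\,F$ for a scalar $\chi(i,j,l)$; evaluating it on the lowest-weight vector of the copy of $U^{l}$ in $U^{i}\otimes U^{j}$ --- this vector has $h'(0)$-weight $-l$, so $\tilde\rho_{i}\otimes\tilde\rho_{j}$ multiplies it by $(-1)^{(i+j-l)/2}$, while its image $\eta_{l}$ is fixed by $\tilde\rho_{l}$ --- gives $\chi(i,j,l)=(-1)^{(i+j-l)/2}$. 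Consequently a nonzero intertwining operator sends $L(k,i)^{\epsilon_1}\otimes L(k,j)^{\epsilon_2}$ into $L(k,l)^{\epsilon_1\epsilon_2(-1)^{(i+j-l)/2}}$, and since $(-1)^{(i+j-l)/2}=+$ exactly when $i+j-l\in 4\mathbb{Z}$, this is precisely the rule encoded by $\mathrm{sign}(i,j,l)^{\pm}$. Combined with the dimension count above and the known fusion rules of $L_{\hat{sl_2}}(k,0)$ this proves (\ref{fusion.untwist1..})--(\ref{fusion.untwist2}).

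For (\ref{fusion.twist1})--(\ref{fusion.twist2}) I run the same argument with $\sigma$-twisted intertwining operators. Using Li's $\Delta$-operator attached to $h''$ \cite{L2} I realize each irreducible $\sigma$-twisted module $\overline{L(k,j)}$ as a twist of $L(k,j)$, compatibly with the construction in \cite{JW1}; its top level is then again built on $U^{j}$, $\sigma$ acts on the relevant lowest-weight vectors by the same formula, and the leading coefficient of a twisted intertwining operator of type $\binom{\overline{L(k,l)}}{L(k,i)\;\overline{L(k,j)}}$ is once more the Clebsch--Gordan projection $U^{i}\otimes U^{j}\to U^{l}$, so the identical evaluation yields the scalar $(-1)^{(i+j-l)/2}$ and hence (\ref{fusion.twist1})--(\ref{fusion.twist2}). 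The step I expect to be the main obstacle is the bookkeeping in the first paragraph: establishing the $\mathbb{Z}_2$-orbifold correspondence between $V^{\sigma}$-intertwining operators and (ordinary or $\sigma$-twisted) $V$-intertwining operators \emph{with the signs made explicit}, matching it with the labelling $L(k,i)^{\pm},\overline{L(k,i)}^{\pm}$ of Theorem \ref{thm:affine sl2}, and checking that the leading coefficient $F$ is genuinely nonzero and honestly $sl_2$-equivariant so that $\chi(i,j,l)$ is well defined; once this is in place, the remaining sign computation is elementary $sl_2$ representation theory on top of the classical fusion rules of $L_{\hat{sl_2}}(k,0)$.
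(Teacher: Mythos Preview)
The paper does not prove this theorem at all: Theorem~\ref{fusion.aff.} is quoted verbatim from \cite{JW2} (note the \texttt{\textbackslash cite\{JW2\}} in the theorem header) and is used only as input to the proof of Theorem~\ref{aff-fusion-untwist}. So there is no ``paper's own proof'' to compare your proposal against.

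That said, your sketch is a sensible reconstruction of how such a result is proved, and the sign computation in your second paragraph is correct: with $\tilde\rho_m$ normalised by $\tilde\rho_m(\eta_m)=\eta_m$ one indeed has $\tilde\rho_m=(\sqrt{-1})^{m}\sigma$ acting as $(-1)^{(\lambda+m)/2}$ on the $h'(0)$-weight-$\lambda$ space, and evaluating on the lowest-weight vector of the copy of $U^{l}$ inside $U^{i}\otimes U^{j}$ gives $\chi(i,j,l)=(-1)^{(i+j-l)/2}$, matching $\mathrm{sign}(i,j,l)^{\pm}$. You have also correctly identified where the real work lies: the passage from $V$-intertwining operators (ordinary or $\sigma$-twisted) to $V^{\sigma}$-intertwining operators, with the signs tracked against the specific labelling $L(k,i)^{\pm}$, $\overline{L(k,i)}^{\pm}$ fixed in \cite{JW2}. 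That step is not a triviality---it requires the induction/restriction machinery for intertwining operators under cyclic orbifolds and a check that the leading term of a $V$-intertwiner on top levels really is the nonzero $sl_2$-equivariant Clebsch--Gordan projection---and in \cite{JW2} it occupies the bulk of the argument. If you intend to write this up as a self-contained proof rather than a citation, you should either reproduce that part of \cite{JW2} or invoke it explicitly; as it stands your proposal is an accurate outline but not yet a proof.
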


 We recall from \cite{CL1} and \cite{CFK} that the coset $$Com(L_{\hat{sl_2}}(k,0), L_{\hat{\g}}(k,0))=L^{Vir}(c_{2k+3,k+2},0),$$ where $L^{Vir}(c_{2k+3,k+2},0)$ is the minimal Virasoro vertex operator algebra with central charge $c_{2k+3,k+2}=1-\frac{6(k+1)^{2}}{(2k+3)(k+2)}$. And the simple modules of the Virasoro vertex operator algebra $L^{Vir}(c_{2k+3,k+2},0)$ are denoted by $V_{r,s}$ for $1\leq r\leq k+1$ and $1\leq s\leq 2k+2$. Let \[N_{r,r^{'}}^{u,r^{''}}=\begin{cases}1,\ & \mbox{if} \ |r-r^{'}|+1\leq r^{''}\leq \mbox{min}\{r+r^{'}-1,2u-r-r^{'}\}, \ r+r^{'}+r^{''} odd,\cr
0,\ &\mbox{otherwise}, \end{cases}\] then the fusion rules of the Virasoro vertex operator algebra $L^{Vir}(c_{2k+3,k+2},0)$ can be expressed as follows \cite{W}:

\begin{eqnarray*}
V_{r,s}\boxtimes V_{r^{'},s^{'}}=\bigoplus_{r^{''}=1}^{k+1}\bigoplus_{s^{''}=1}^{2k+2}N_{r,r^{'}}^{k+2,r^{''}}N_{s,s^{'}}^{2k+3,s^{''}}V_{r^{''},s^{''}}.
\end{eqnarray*}

Now we can state our main result in this section.

\begin{thm}\label{aff-fusion-untwist} The fusion rules for the irreducible modules of the orbifold vertex operator algebra $L_{\hat{g}}(k,0)^{\sigma}$ are as follows:

\begin{eqnarray}\label{fusion.1..}
M_{r}^{even,+}\boxtimes M_{r^{'}}^{even,\pm}=\bigoplus_{r^{''}=1}^{2k+2} N_{r,r^{'}}^{r^{''}}M_{r^{''}}^{even,\pm}
\end{eqnarray}

\begin{eqnarray}\label{fusion.2..}
M_{r}^{even,-}\boxtimes M_{r^{'}}^{even,\pm}=\bigoplus_{r^{''}=1}^{2k+2} N_{r,r^{'}}^{r^{''}}M_{r^{''}}^{even,\mp}
\end{eqnarray}

\begin{eqnarray}\label{fusion.3..}
M_{r}^{even,+}\boxtimes \overline{M_{r^{'}}^{even,\pm}}=\bigoplus_{r^{''}=1}^{2k+2} N_{r,r^{'}}^{r^{''}}\overline{M_{r^{''}}^{even,\pm}}
\end{eqnarray}

\begin{eqnarray}\label{fusion.4..}
M_{r}^{even,-}\boxtimes \overline{M_{r^{'}}^{even,\pm}}=\bigoplus_{r^{''}=1}^{2k+2} N_{r,r^{'}}^{r^{''}}\overline{M_{r^{''}}^{even,\mp}}
\end{eqnarray}

\begin{eqnarray}\label{fusion.5..}
M_{r}^{odd,+}\boxtimes M_{r^{'}}^{odd,\pm}=\bigoplus_{r^{''}=1}^{2k+2} N_{r,r^{'}}^{r^{''}}M_{r^{''}}^{even,\mp}
\end{eqnarray}


\begin{eqnarray}\label{fusion.7..}
M_{r}^{odd,-}\boxtimes M_{r^{'}}^{odd,\pm}=\bigoplus_{r^{''}=1}^{2k+2} N_{r,r^{'}}^{r^{''}}M_{r^{''}}^{even,\pm}
\end{eqnarray}

\begin{eqnarray}\label{fusion.8..}
M_{r}^{odd,+}\boxtimes \overline{M_{r^{'}}^{odd,\pm}}=\bigoplus_{r^{''}=1}^{2k+2} N_{r,r^{'}}^{r^{''}}\overline{M_{r^{''}}^{even,\mp}}
\end{eqnarray}


\begin{eqnarray}\label{fusion.10..}
M_{r}^{odd,-}\boxtimes \overline{M_{r^{'}}^{odd,\pm}}=\bigoplus_{r^{''}=1}^{2k+2} N_{r,r^{'}}^{r^{''}}\overline{M_{r^{''}}^{even,\pm}}
\end{eqnarray}


\begin{eqnarray}\label{fusion.12..}
M_{r}^{odd,+}\boxtimes M_{r^{'}}^{even,\pm}=\bigoplus_{r^{''}=1}^{2k+2} N_{r,r^{'}}^{r^{''}}M_{r^{''}}^{odd,\pm}
\end{eqnarray}


\begin{eqnarray}\label{fusion.14..}
M_{r}^{odd,-}\boxtimes M_{r^{'}}^{even,\pm}=\bigoplus_{r^{''}=1}^{2k+2} N_{r,r^{'}}^{r^{''}}M_{r^{''}}^{odd,\mp}
\end{eqnarray}


\begin{eqnarray}\label{fusion.16..}
M_{r}^{odd,+}\boxtimes \overline{M_{r^{'}}^{even,\pm}}=\bigoplus_{r^{''}=1}^{2k+2} N_{r,r^{'}}^{r^{''}}\overline{M_{r^{''}}^{odd,\pm}}
\end{eqnarray}


\begin{eqnarray}\label{fusion.18..}
M_{r}^{odd,-}\boxtimes \overline{M_{r^{'}}^{even,\pm}}=\bigoplus_{r^{''}=1}^{2k+2} N_{r,r^{'}}^{r^{''}}\overline{M_{r^{''}}^{odd,\mp}}
\end{eqnarray}


where $N_{r,r^{'}}^{r^{''}}=N_{r,r^{'}}^{2k+3, r^{''}}$, $1\leq r,r^{'}\leq 2k+2$.

\end{thm}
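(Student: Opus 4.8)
The plan is to leverage the tensor-product structure of the irreducible $L_{\hat g}(k,0)^{\sigma}$-modules recorded in Theorem \ref{thm:affinetwist'}, which exhibit each module as a direct sum of outer tensor products $W\otimes V_{s,r}$ with $W$ an irreducible $L_{\hat{sl_2}}(k,0)^{\sigma}$-module and $V_{s,r}$ an irreducible module for the commutant Virasoro algebra $L^{Vir}(c_{2k+3,k+2},0)$. Since $L_{\hat g}^{even}(k,0)$ is rational and $C_2$-cofinite, so is its orbifold $L_{\hat g}(k,0)^{\sigma}$, hence all fusion products exist and are computed by summing $N^{W}_{W^1,W^2}$ over irreducibles $W$. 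First I would invoke the general principle (from \cite{ADL} / \cite{FHL}-type arguments for tensor-product vertex operator algebras, together with the embedding $L_{\hat{sl_2}}(k,0)^{\sigma}\otimes L^{Vir}(c_{2k+3,k+2},0)\hookrightarrow L_{\hat g}(k,0)^{\sigma}$) that an intertwining operator among $L_{\hat g}(k,0)^{\sigma}$-modules restricts to a sum of tensor products of intertwining operators for the two tensor factors; this gives an upper bound
\[
N^{W^3}_{W^1,W^2}\big(L_{\hat g}(k,0)^{\sigma}\big)\ \leq\ \sum N^{L(k,l)^{\epsilon}}_{\,\cdot\,,\,\cdot\,}\big(L_{\hat{sl_2}}(k,0)^{\sigma}\big)\cdot N^{\,r''}_{\,r,r'}\big(L^{Vir}(c_{2k+3,k+2},0)\big),
\]
where the sign data is dictated by the rules of Theorem \ref{fusion.aff.} and the Virasoro fusion coefficients are the $N_{r,r'}^{2k+3,r''}$ recalled above (the first Virasoro label being forced to $1$ throughout, since only $V_{i+1,r}$ with fixed first index appear).

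Next I would carry out the bookkeeping of signs. For the even–even products \eqref{fusion.1..}–\eqref{fusion.4..}, each summand $L(k,i)\otimes V_{i+1,r}$ with $i\in 4\mathbb Z$ carries a $+$ and with $i\in 4\mathbb Z+2$ a $-$ (or the opposite, for the $-$-type module); fusing with $L(k,j)^{\pm}$ via \eqref{fusion.untwist1..}–\eqref{fusion.twist2} produces $L(k,l)^{\mathrm{sign}(i,j,l)^{\pm}}$, and one checks using the congruence $i+j+l\equiv 0\ (\mathrm{mod}\ 2)$ together with the parity of $(i+j-l)/2$ that the resulting sign pattern on each fixed-$l$ piece again matches exactly the pattern of $M^{even,\pm}_{r''}$. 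The same sign analysis, but starting from odd-index pieces where Lemma \ref{lem:action.} gives eigenvalues $\mp\sqrt{-1}$, shows that $M^{odd,+}_{r}\boxtimes M^{odd,\pm}_{r'}$ lands in the even-type modules (since $(\mp\sqrt{-1})(\mp\sqrt{-1})=-1$ or $+1$), explaining the $even,\mp$ and $even,\pm$ targets in \eqref{fusion.5..}, \eqref{fusion.7..}; and $M^{odd}\boxtimes M^{even}$ lands in $M^{odd}$, explaining \eqref{fusion.12..}, \eqref{fusion.14..}. For the twisted targets \eqref{fusion.3..}, \eqref{fusion.4..}, \eqref{fusion.8..}, \eqref{fusion.10..}, \eqref{fusion.16..}, \eqref{fusion.18..} the identical computation applies with $L(k,l)^{\epsilon}$ replaced by $\overline{L(k,l)}^{\epsilon}$, using the twisted fusion rules \eqref{fusion.twist1}, \eqref{fusion.twist2}; crucially the set of untwisted $L_{\hat{sl_2}}(k,0)^{\sigma}$-modules acts on the twisted ones, so no new labels enter. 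Throughout, the Virasoro index $r''$ ranges over $1\le r''\le 2k+2$ with multiplicity $N^{2k+3,r''}_{r,r'}$, which is the coefficient appearing on the right-hand sides.

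To upgrade the upper bound to an equality I would produce, for each target summand with nonzero coefficient, a nonzero intertwining operator, obtained as the (outer) tensor product of a nonzero intertwining operator for $L_{\hat{sl_2}}(k,0)^{\sigma}$ (existence guaranteed by Theorem \ref{fusion.aff.}, whose coefficients are all $0$ or $1$) with a nonzero one for the Virasoro vertex operator algebra (existence when $N^{2k+3,r''}_{r,r'}=1$); one then checks that these restricted operators extend to genuine $L_{\hat g}(k,0)^{\sigma}$-intertwining operators, which follows because $L_{\hat g}(k,0)^{\sigma}$ is a simple current extension of $L_{\hat{sl_2}}(k,0)^{\sigma}\otimes L^{Vir}(c_{2k+3,k+2},0)$ by the even-graded piece and the induction functor along such an extension is compatible with fusion. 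A clean alternative is to bypass the explicit construction and use a counting argument: sum the dimensions of both sides over all targets and compare with the total via associativity of fusion and the known $S$-matrix / Verlinde data of the two tensor factors, or simply note that the symmetry of fusion rules (Proposition \ref{fusionsymm.}) together with the contragredient identifications of Theorem \ref{thm:contragredient aff} pins down the remaining coefficients once the even-type products are established. I expect the main obstacle to be the second point — rigorously justifying that every $L_{\hat g}(k,0)^{\sigma}$-intertwining operator decomposes as a sum of tensor products over the subalgebra $L_{\hat{sl_2}}(k,0)^{\sigma}\otimes L^{Vir}(c_{2k+3,k+2},0)$ and, conversely, that each tensor-product intertwiner with compatible sign data lifts — i.e. controlling the induction along the $\mathbb Z_2$-graded simple-current extension; the sign bookkeeping itself, while lengthy, is routine given Lemma \ref{lem:action.} and Theorem \ref{fusion.aff.}.
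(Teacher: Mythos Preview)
Your approach would work in principle, but it differs meaningfully from the paper's and is considerably more laborious. You propose to carry the full decomposition of each $L_{\hat g}(k,0)^{\sigma}$-module as a sum $\bigoplus_i L(k,i)^{\epsilon}\otimes V_{i+1,r}$ through the fusion product, doing case-by-case sign bookkeeping over all residues of $i$ modulo $4$ using Theorem~\ref{fusion.aff.}; you then face the genuine issue (which you correctly flag) of matching upper and lower bounds on intertwiner spaces across the extension $L_{\hat{sl_2}}(k,0)^{\sigma}\otimes L^{Vir}(c_{2k+3,k+2},0)\subset L_{\hat g}(k,0)^{\sigma}$.

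The paper instead exploits a simple-current factorisation and thereby eliminates essentially all of the sign analysis. Writing $L^{even,\pm}=M_1^{even,\pm}$ and $L^{odd,\pm}=M_1^{odd,\pm}$, it observes directly from the description of the modules as $\sigma$-eigenspaces that
\[
M_r^{even,\pm}=L^{even,\pm}\boxtimes M_r^{even,+},\qquad M_r^{odd,\pm}=L^{odd,\pm}\boxtimes M_r^{even,+},
\]
and likewise with bars. Hence every product in the theorem reduces, by associativity and commutativity of $\boxtimes$, to two ingredients: (a) the single product $M_r^{even,+}\boxtimes M_{r'}^{even,+}$, and (b) the small multiplication table of the simple currents $L^{even,\pm},L^{odd,\pm}$ among themselves (e.g.\ $L^{odd,+}\boxtimes L^{odd,+}=L^{even,-}$), which encodes the $\mathbb Z_4$-grading once and for all. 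For (a) the paper selects only the summand $L(k,0)^{+}\otimes V_{1,r}\subset M_r^{even,+}$; since $L(k,0)^{+}$ is the tensor unit for $L_{\hat{sl_2}}(k,0)^{\sigma}$, the $sl_2$ side contributes nothing and one gets $\bigoplus_{r''}N_{r,r'}^{2k+3,r''}M_{r''}^{even,+}$ immediately from Virasoro fusion, with no $\mathrm{sign}(i,j,l)^{\pm}$ checks at all. The remaining eleven formulas are then one-line consequences.

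What each route buys: yours is a head-on verification and would recover the same answers, but you must repeat the congruence analysis $i+j-l\in 4\mathbb Z$ versus $i+j-l\notin 4\mathbb Z$ over many summands and still resolve the lifting/restriction problem you identified. The paper's route collapses all of that into the multiplicativity of the $\sigma$-eigenvalue under fusion together with the fact that $M_r^{even,+}$ is generated, as an $L_{\hat g}(k,0)^{\sigma}$-module, by its $i=0$ piece; the only nontrivial external input is the Virasoro fusion of the modules $V_{1,r}$.
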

\begin{proof} We prove (\ref{fusion.1..}) and (\ref{fusion.5..}), others are similar to prove. First we notice  that $M_{1}^{even}=L^{even}_{\hat{g}}(k,0)$, thus $L^{even}_{\hat{g}}(k,0)^{\sigma}=M_{1}^{even,+}$. We denote $M_{1}^{even,+}$ by $L^{even,+}$, then we have $L^{even}_{\hat{g}}(k,0)=L^{even,+}\oplus L^{even,-}$.
Since we have

\begin{equation}\label{fusion.1'..}
\begin{split}
 M_{r}^{even,+}\boxtimes M_{r^{'}}^{even,+} &= L^{even,+}\boxtimes (L(k,0)^{+}\otimes V_{1,r})\boxtimes L^{even,+}\boxtimes (L(k,0)^{+}\otimes V_{1,r^{'}})
\\
&=L^{even,+}\boxtimes \big((L(k,0)^{+}\otimes V_{1,r})\boxtimes (L(k,0)^{+}\otimes V_{1,r^{'}})\big)\\
&=L^{even,+}\boxtimes (\bigoplus_{r^{''}=1}^{2k+2}N_{r,r^{'}}^{2k+3,r^{''}}(L(k,0)^{+}\otimes V_{1,r^{''}}))\\
&=\bigoplus_{r^{''}=1}^{2k+2}N_{r,r^{'}}^{r^{''}} M_{r^{''}}^{even,+},
\end{split}
 \end{equation}
where we denote $N_{r,r^{'}}^{2k+3,r^{''}}$ by $N_{r,r^{'}}^{r^{''}}$, and in the third equality we used (\ref{fusion.untwist1..}) in Theorem \ref{fusion.aff.}. Thus we prove  (\ref{fusion.1..}). From the definition of $M_{r}^{even,\pm}$ and $ M_{r}^{odd,\pm}$, we have
$$M_{r}^{even,+}=L^{even,+}\boxtimes M_{r}^{even,+},\ M_{r}^{even,-}=L^{even,-}\boxtimes \  M_{r}^{even,+},$$
$$M_{r}^{odd,+}=L^{odd,+}\boxtimes \  M_{r}^{even,+},\ M_{r}^{odd,-}=L^{odd,-}\boxtimes M_{r}^{even,+},$$
where $L^{odd,\pm}=M_{1}^{odd,\pm}$.
Thus we have
\begin{equation*}
\begin{split}
 M_{r}^{odd,+}\boxtimes M_{r^{'}}^{odd,+} &=L^{odd,+}\boxtimes M_{r}^{even,+} \boxtimes L^{odd,+}\boxtimes M_{r^{'}}^{even,+}
\\
&=L^{odd,+}\boxtimes L^{odd,+} \boxtimes M_{r}^{even,+}\boxtimes M_{r^{'}}^{even,+}\\
&=L^{even,-}\boxtimes (\bigoplus_{r^{''}=1}^{2k+2}N_{r,r^{'}}^{r^{''}} M_{r^{''}}^{even,+})\\
&=\bigoplus_{r^{''}=1}^{2k+2}N_{r,r^{'}}^{r^{''}} M_{r^{''}}^{even,-},
\end{split}
 \end{equation*}
  where in the third equality, we used (\ref{fusion.1'..}) proved above. Thus we obtain (\ref{fusion.5..}).

 For  (\ref{fusion.3..}), (\ref{fusion.4..}), (\ref{fusion.8..}), (\ref{fusion.10..}), (\ref{fusion.16..}), (\ref{fusion.18..}), notice that $$\overline{M_{r}^{even,+}}=L^{even,+}\boxtimes \overline{M_{r}^{even,+}},\ \overline{M_{r}^{even,-}}=L^{even,-}\boxtimes \overline{M_{r}^{even,+}},$$
$$\overline{M_{r}^{odd,+}}=L^{odd,+}\boxtimes\overline{ M_{r}^{even,+}},\ \overline{M_{r}^{odd,-}}=L^{odd,-}\boxtimes \overline{M_{r}^{even,+}}.$$ By a similar argument, we can obtain the results.
\end{proof}






\end{document}